\newtheorem{theorem}{Theorem}[section]
\newtheorem{proposition}[theorem]{Proposition}
\newtheorem{corollary}[theorem]{Corollary}
\newtheorem{preremark}[theorem]{Remark}
\newtheorem{predefinition}[theorem]{Definition}
\newtheorem{preexample}[theorem]{Example}
\newtheorem{prenotation}[theorem]{Notation}
\newtheorem{preconjecture}[theorem]{Conjecture}
\newenvironment{remark}{\begin{preremark}\rm}{\end{preremark}}
\newenvironment{definition}{\begin{predefinition}\rm}
{\end{predefinition}}
\newenvironment{example}{\begin{preexample}\rm}{\end{preexample}}
\newenvironment{notation}{\begin{prenotation}\rm}{\end{prenotation}}
\def\OO{{\mathcal{O}}}
\def\QQ{\mathbb{Q}}
\def\X{{\mathbb{X}}}
\def\AA{\mathbb{A}}
\newcommand{\M}{{\mathfrak{M}}}
\newcommand{\N}{{\mathfrak{N}}}
\newcommand{\m}{{\mathfrak{m}}}
\let\epsilon=\varepsilon
\def\phi{{\varphi}}
\let\Psi=\varPsi
\let\Phi=\varPhi
\let\theta=\vartheta
\def\LT{\mathop{\rm LT}\nolimits}
\def\LC{\mathop{\rm LC}\nolimits}
\def\NF{\mathop{\rm NF}\nolimits}
\def\Hom{\mathop{\rm Hom}\nolimits}
\def\Spec{\mathop{\rm Spec}\nolimits}
\def\Cot{\mathop{\rm Cot}\nolimits}
\def\Tan{\mathop{\rm Tan}\nolimits}
\def\tail{\mathop{\rm tail}\nolimits}
\def\indets{\mathop{\rm indets}\nolimits}
\def\GFan{\mathop{\rm GFan}\nolimits}
\def\Ker{\mathop{\rm Ker}\nolimits}
\newcommand{\Lin}{\mathop{\rm Lin}\nolimits}
\newcommand{\edim}{\mathop{\rm edim}\nolimits}
\newcommand{\sepdim}{\mathop{\rm sepdim}\nolimits}
\newcommand{\LI}{\mathop{\rm LI}\nolimits}
\newcommand{\BO}{\mathbb{B}_{\mathcal{O}}}
\def\tfrac #1#2{{\textstyle\frac{#1}{#2}}}
\def\cocoa{\mbox{\rm
  C\kern-.13em o\kern-.07 em C\kern-.13em o\kern-.15em A}}
\def\apcocoa{\mbox{\rm
A\kern-0.13em p\kern -0.07em C\kern-.13em o\kern-.07 em C\kern-.13em
o\kern-.15em A}}
\begin{document}

\title{Cotangent Spaces and Separating Re-embeddings}

\author{Martin Kreuzer}
\address{Fakult\"at f\"ur Informatik und Mathematik, Universit\"at
Passau, D-94030 Passau, Germany}
\email{Martin.Kreuzer@uni-passau.de}

\author{Le Ngoc Long}
\address{Fakult\"at f\"ur Informatik und Mathematik, Universit\"at
Passau, D-94030 Passau, Germany and Department of Mathematics, 
University of Education, Hue University, 34 Le Loi Street, Hue City, Vietnam}
\email{lelong@hueuni.edu.vn}

\author{Lorenzo Robbiano}
\address{Dipartimento di Matematica, Universit\`a di Genova,
Via Dodecaneso 35,
I-16146 Genova, Italy}
\email{lorobbiano@gmail.com}

\begin{abstract}
Given an affine algebra $R=P/I$, where $P=K[x_1,\dots,x_n]$ is a polynomial 
ring over a field~$K$ and~$I$ is an ideal in~$P$, we study re-embeddings 
of the affine scheme $\Spec(R)$, i.e., presentations $R \cong P'/I'$
such that~$P'$ is a polynomial ring in fewer indeterminates.
To find such re-embeddings, we use polynomials~$f_i$ in the ideal~$I$ which are
coherently separating in the sense that they are of the form $f_i= z_i - g_i$ with an 
indeterminate~$z_i$ which divides neither a term in the support of~$g_i$
nor in the support of~$f_j$ for $j\ne i$. The possible numbers of such
sets of polynomials are shown to be governed by the Gr\"obner fan of~$I$.
The dimension of the cotangent space of~$R$ at a $K$-linear maximal ideal is
a lower bound for the embedding dimension, and if we find coherently separating
polynomials corresponding to this bound, we know that we have determined the
embedding dimension of~$R$ and found an optimal re-embedding. 
\end{abstract}

\keywords{cotangent space, embedding dimension, affine scheme, Gr\"obner basis
Gr\"obner fan, border basis scheme}

\subjclass[2010]{Primary 13P10; Secondary  14Q20, 13E15, 14R10}

\maketitle

%
%

\section*{Introduction}

Given an affine scheme $\X$ embedded in an affine space $\mathbb{A}^n_K$ over a field~$K$,
it is a natural question to ask whether~$\X$ can be embedded into an affine space of
lower dimension. For instance, a classical result in algebraic geometry says that a smooth
affine variety of dimension~$d$ over an infinite field~$K$ can be embedded into~$\AA^{2d+1}_K$.
This was generalized by V.~Srinivas to the non-smooth case in~\cite{Sri}. The method to prove 
these results is to start with an embedding
$\X \longrightarrow \AA^n_K$ into a high-dimensional affine space and then to apply
general linear projections which re-embed~$\X$, i.e., which define isomorphisms on~$\X$.
Partial solutions to the above problem can be found in 
many papers. Among them, it is worth mentioning \cite{BC}, \cite{FR}, and \cite{LR}
where an approach of algebraic nature is used which is, to a certain extent, similar to 
our method.

Although the general problem of finding the minimal number of
algebra generators of an affine $K$-algebra seems to be very hard, it turns out to be possible
to re-embed some affine schemes in much lower dimensional affine spaces using linear projections
based on {\it separating indeterminates}. For instance, if a polynomial
in the given ideal is of the form $f= z-g$ where the indeterminate~$z$ does not divide any term
in the support of~$g$, one can eliminate~$z$ from the polynomials in the given ideal in an
obvious way. To eliminate several indeterminates at a time, one needs a {\it coherently separating}
set of polynomials in the sense defined below. The main advantage of such re-embeddings
is that they do not involve generic changes of coordinates. 
Here the possible dimensions of the ambient spaces after the re-embeddings are controlled by the
{\it Gr\"obner fan} of the given ideal which, alas, is hard to compute in general.

Can we reach the embedding dimension, i.e., the smallest dimension of the ambient space
in this way? In general, this is impossible, because the optimal re-embedding may not
be achieved via linear projections. One immediate obstruction is the
dimension of the tangent spaces at the $K$-rational points of the scheme. There can be no isomorphic
re-embedding into a space which has a lower dimension than one of the tangent spaces.
However, it is quite nice and useful to observe that if we reach this dimension at some point,
the resulting re-embedding is optimal and we have found the embedding dimension.
For instance, in the last section of the paper we show an example where our method allows us to 
compute an optimal embedding of a singular border basis scheme (see Example~\ref{ex:singBBS}).
More on good embeddings of border basis schemes will be discussed in a forthcoming paper.

To achieve these goals, we proceed as follows. 
In Section~\ref{Tangent and Cotangent Spaces} we consider  an affine algebra $R=P/I$, 
where $P=K[x_1,\dots,x_n]$ is a polynomial ring over a field~$K$ and~$I$ is an ideal in~$P$.
We observe that the computation of the tangent and cotangent
spaces of~$R$ at a maximal ideal $\m=\langle x_1-a_1,\dots,x_n-a_n\rangle /I$
corresponding to a $K$-rational point $(a_1,\dots,a_n)$ of~$\X = \Spec(R)$ boils down
to the calculation of the $\M$-linear part of~$I$, where $\M=\langle x_1-a_1,\dots,x_n-a_n\rangle
\supseteq I$ (see Prop.~\ref{prop:cotancomp}), and can be achieved efficiently using
any set of generators of~$I$ (see Prop.~\ref{prop:linearpart}).

In Section~\ref{Separating Re-embeddings}
we introduce the important concept of separating re-embeddings.
Given a set of distinct indeterminates $Z = \{ z_1,\dots,z_s\}$ in $X=\{x_1,\dots,x_n\}$,
we say that a set of polynomials $f_1,\dots,f_s\in I$ is {\it coherently $Z$-separating}
if each~$f_i$ is of the form $f_i = z_i -g_i$ with $g_i\in P$ such that~$z_i$ divides
neither a term in the support of~$g_i$ nor in the support of~$f_j$ with $j\ne i$.
This property is equivalent to the existence of Gr\"obner bases of~$I$ of a very special
shape (see~Prop.~\ref{prop:partofmin}) which in turn allow us to define an isomorphism
$\Phi:\; P/I \longrightarrow \widehat{P} / (I\cap \widehat{P})$, where
$\widehat{P} = K[X\setminus Z]$ has fewer indeterminates (see Thm.~\ref{thm:goodIso}). 
Clearly, the map~$\Phi$ corresponds to a re-embedding of $\X=\Spec(R)$ into a 
lower-dimensional affine space. We call it the {\it $Z$-separating re-embedding} of~$I$.

In Section~3 we study the possible choices of the sets of indeterminates~$Z$ for which
a coherently $Z$-separating set of polynomials exists in the given ideal~$I$.
Via the Gr\"obner bases mentioned above, they are related to the {\it Gr\"obner fan} of~$I$,
i.e., to the set of marked reduced Gr\"obner bases of~$I$. Here the sets~$Z$ correspond
to the sets of leading indeterminates of a reduced Gr\"obner basis of~$I$
(see Prop.~\ref{prop:equivZsepGB}). The existence of an {\it optimal separating re-embedding}
of~$I$ for which the ambient dimension equals the embedding dimension of~$R$
is therefore equivalent to the existence of a reduced Gr\"obner basis of~$I$
with enough leading terms which are indeterminates (see Prop.~\ref{prop:GFBest}).
This correspondence can be made 1-1 by introducing a suitable equivalence relation
on the set of reduced Gr\"obner bases of~$I$ (see Prop.~\ref{prop:equivalence}).

Consequently, we search for optimal re-embeddings in Section~4. The key observation
is that the dimension of the tangent space of~$R$ at any $K$-rational point is a
lower bound for the embedding dimension (see Thm.~\ref{thm:linandtang}) and that we
know we have reached the embedding dimension if we find {\it one} set of coherently
$Z$-separating polynomials of the correct cardinality (see~Cor.~\ref{cor:checkopt}).
Here the required cardinality of~$Z$ can be calculated efficiently using the technique
explained in Section~1. Suitable examples illustrate our main results.

The notation and definitions in this paper follow~\cite{KR1} and~\cite{KR2}. 
The calculations underlying most examples were performed using the computer algebra
system~\cite{CoCoA}.

\bigskip\bigbreak

\section{Tangent and Cotangent Spaces}
\label{Tangent and Cotangent Spaces}

In this section we introduce some concepts which play a fundamental role in the paper.
Most of them are well-known, however, for the sake of clarity and coherence of notation, 
we include them together with propositions, proofs, and examples.

Let $K$ be a field, let $P = K[x_1,\dots,x_n]$, let $p=(a_1,\dots,a_n)\in K^n$, and
let $\M$ be the linear maximal ideal $\M= \langle x_1-a_1,\dots,x_n -a_n\rangle $.
In the following we assume that~$I$ is an ideal in~$P$ such that $I \subseteq\M$.
Then we let $R=P/I$, we consider the maximal ideal $\m = \M/I$ of~$R$, and
we write $\X=\Spec(R)$ for the affine scheme associated to~$R$.
Notice that $I\subseteq \M$ implies $R/\m \cong P/\M \cong K$.

The first fundamental objects that we need in the following  are the
tangent space and the cotangent space of~$\X$ at a point~$p$. Let us recall them.

\begin{definition}\label{def:cotangent}
As above, let $p=(a_1,\dots,a_n)$, let $\M=\langle x_1-a_1, \dots, x_n-a_n\rangle$,
let~$I$ be an ideal in~$P$ which is contained in~$\M$, and let $\m=\M/I$.

\begin{enumerate}
\item[(a)] The $K$-vector space $\Cot_\m(R) = \m/\m^2$ is called the \textbf{Zariski cotangent space}
of~$R$ at the point~$p$. It is also denoted by $\Cot_p(\X)$.

\item[(b)] The dual vector space $\Tan_\m(R) = \Hom_K(\m/\m^2, K)$ of the cotangent space
is called the {\bf Zariski tangent space} of~$R$ at the point~$p$. It is also denoted by $\Tan_p(\X)$.
\end{enumerate}
\end{definition}

In order to calculate tangent and cotangent spaces, it will be useful to change the coordinate system
so that the point~$p$ under consideration is the origin. The following notation will come in handy.

\begin{notation}
In the above setting, we introduce new indeterminates $y_1,\dots,y_n$. Then we define the
$K$-algebra isomorphism 
$$
\phi_p:\; P \;\longrightarrow\;  K[y_1,\dots,y_n]\hbox{\qquad\rm given by\quad}x_i \mapsto y_i+a_i
\hbox{\quad\rm for\quad}i=1,\dots,n
$$
and its inverse
$$
\psi_p:\; K[y_1,\dots,y_n] \;\longrightarrow\;  P\hbox{\qquad\rm given by\quad}y_i \mapsto x_i-a_i
\hbox{\quad\rm for\quad}i=1,\dots,n.
$$
The homogeneous maximal ideal of $K[y_1,\dots,y_n]$ is $\N = \langle y_1,\dots,y_n\rangle$.
Clearly, we have $\N=\phi_p(\M)$.
For every $i\ge 0$, its homogeneous component of degree~$i$ is denoted by~$\N_i$.
In particular, we have $\N^i = \bigoplus_{j\ge i} \N_j$ for every $i\ge 1$.
\end{notation}

Now we are ready to introduce the following generalization of the linear part of a polynomial.

\begin{definition}
As above, let~$I$ be an ideal in~$P$ which is contained in the maximal ideal
$\M= \langle x_1-a_1, \dots, x_n-a_n \rangle$.
\begin{enumerate}
\item[(a)] Given $f\in P$, let $\ell\in \N_1$ be the homogeneous component of degree one 
of $\phi_p(f) = f(y_1+a_1,\dots,y_n+a_n)$.
Then the polynomial $\Lin_\M(f)= \psi_p(\ell) = \ell(x_1-a_1,\dots,x_n-a_n)$ is called
the {\bf $\M$-linear part} of~$f$.

\item[(b)] The $K$-vector space $\Lin_{\M}(I) = \langle \Lin_{\M}(f) \mid f\in I \rangle_K$ is called
the {\bf $\M$-linear part} of~$I$.
\end{enumerate}
\end{definition}

Notice that the result $\phi_p(f)$ of the substitution in part~(a) of this definition 
has no constant term if and only if $f\in\M$. Clearly, the definition yields an effective method 
for computing $\Lin_\M(f)$. The following example illustrates it.

\begin{example}\label{ex:calclin}
Let $P=\QQ[x_1,x_2]$, let $\M=\langle x_1-1,\; x_2-2\rangle $, and let $f = x_1^2 + x_2^2 - x_1 - 4$.
Then we calculate $g = f(y_1+1,\, y_2+2) = y_1^2 + y_2^2 +y_1 +4y_2$ and $\Lin_\N(g)= y_1+4y_2$.
Hence we obtain $\Lin_\M(f) = (x_1-1) + 4(x_2-2) = x_1 +4x_2 -9$. 
\end{example}

Now the cotangent and the tangent space of $\X=\Spec(R)$ at~$p$ can be described as follows.

\begin{proposition}\label{prop:cotancomp}
As in the above setting, let $p=(a_1,\dots,a_n)\in K^n$, 
let $\M=\langle x_1-a_1,\dots, x_n-a_n\rangle$, and 
let~$I$ be an ideal in~$P$ which is contained in~$\M$.
\begin{enumerate}
\item[(a)] Let $\M_1= \langle x_1-a_1, \dots, x_n-a_n \rangle_K$.
Then we have $\Lin_\M(I) = \M_1 \cap (I+\M^2)$.

\item[(b)] There is an isomorphism of $K$-vector spaces 
$\Cot_\m(R) \cong \M_1 / \Lin_\M(I)$.

\item[(c)] There is an isomorphism of $K$-vector spaces 
$$
\Tan_\m(R)\cong \{ v\in K^n \mid \ell(p+v)=0 \hbox{\ \it for all\ }\ell\in \Lin_\M(I)\}
$$
\end{enumerate}
\end{proposition}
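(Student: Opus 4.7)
The plan is to work one part at a time, using the translation $\phi_p$ to move everything to the origin where the direct-sum decomposition of the maximal ideal into its graded pieces is transparent, and then translate back.

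For part~(a), I would prove the two inclusions separately. For ``$\subseteq$'', given $f\in I$, I would write $\phi_p(f)=\ell_1+r$ with $\ell_1\in\N_1$ and $r\in\N^2$ (the constant term is absent because $I\subseteq\M$). Applying $\psi_p$ gives $f=\Lin_\M(f)+\psi_p(r)$ with $\psi_p(r)\in\M^2$, so $\Lin_\M(f)=f-\psi_p(r)\in I+\M^2$, while by construction $\Lin_\M(f)\in\M_1$. For ``$\supseteq$'', given $h\in\M_1\cap(I+\M^2)$, write $h=f+g$ with $f\in I$, $g\in\M^2$, apply $\phi_p$, and observe that $\phi_p(h)\in\N_1$ and $\phi_p(g)\in\N^2$ force the degree-one component of $\phi_p(f)$ to equal $\phi_p(h)$; hence $\Lin_\M(f)=h$.

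For part~(b), I would start from the standard identification $\Cot_\m(R)=\m/\m^2=\M/(\M^2+I)$. The main ingredient is the $K$-vector space decomposition $\M=\M_1\oplus\M^2$, which transports from $\N=\N_1\oplus\N^2$ via $\psi_p$. Since $\M^2\subseteq \M^2+I\subseteq\M$, the modular law gives $\M^2+I=((\M^2+I)\cap\M_1)\oplus\M^2$, and by~(a) the first summand equals $\Lin_\M(I)$. Taking the quotient therefore yields the desired isomorphism $\M/(\M^2+I)\cong\M_1/\Lin_\M(I)$.

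For part~(c), I would identify $\Hom_K(\M_1,K)$ with $K^n$ by sending a functional $\alpha$ to the vector $v=(\alpha(x_1-a_1),\dots,\alpha(x_n-a_n))$. A direct evaluation shows that for $\ell=\sum c_i(x_i-a_i)\in\M_1$ one has $\alpha(\ell)=\sum c_iv_i=\ell(p+v)$. The functional $\alpha$ descends to $\M_1/\Lin_\M(I)$ exactly when it vanishes on $\Lin_\M(I)$, which by the above evaluation is the condition $\ell(p+v)=0$ for all $\ell\in\Lin_\M(I)$. Combining this with part~(b) finishes the claim.

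The only genuinely substantive step is part~(a); everything else is formal once the decomposition $\M=\M_1\oplus\M^2$ and the elementary translation formalism are in place. The mild subtlety I would pay attention to is keeping the distinction between the $K$-vector space $\M_1$ of polynomials of translated degree one and the ideal it generates, and making sure the intersection $(\M^2+I)\cap\M_1$ is computed inside~$P$ and not inside any quotient.
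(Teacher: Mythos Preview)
Your proposal is correct and follows essentially the same route as the paper's proof: both reduce part~(a) to the graded situation at the origin via $\phi_p$ and prove the nontrivial inclusion by stripping off the $\N^2$-piece, both obtain~(b) from the decomposition $\M=\M_1\oplus\M^2$ together with~(a), and both read off~(c) by interpreting dual vectors of $\M_1$ as evaluations at $p+v$. The only differences are cosmetic---you spell out the $\subseteq$ direction of~(a) and invoke the modular law explicitly in~(b), whereas the paper calls the former ``trivially true'' and phrases the latter as passing to the residue class space.
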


\begin{proof} 
To prove~(a), we let $J=\phi_p(I)$.
By the definition of the linear part of an ideal,
we have to show $\Lin_\N(J) = \N_1 \cap (J+\N^2)$,
where we know that $J\subseteq \N$. Since the inclusion $\subseteq$
is trivially true, it suffices to prove $\supseteq$. Let $\ell\in \N_1$ be of the form
$\ell=f+g$ with $f\in J$ and $g\in \N^2$. Then the equality $f= \ell -g$ shows that
$\ell=\Lin_\N(f)$, and hence $\ell\in\Lin_\N(J)$, as claimed.

To prove~(b), we note that $\Cot_\m(R) = \m / \m^2 \cong
\M / (I+\M^2)$. Using $\N^i=\bigoplus_{j\ge i} \N_j$ for $i\ge 1$,
the isomorphism~$\psi_p$ yields direct sum decompositions
$\M=\bigoplus_{i\ge 1} \M_i$ and $\M^2 = \bigoplus_{i\ge 2} \M_i$,
where $\M_i = \psi_p(\N_i)$ for $i\ge 1$. Hence we get that $\Cot_\m(R)$
is the residue class vector space of~$\M_1$ modulo $\M_1 \cap (I+\M^2)$,
and the latter intersection equals $\Lin_\M(I)$ by~(a).

Claim~(c) is a consequence of~(b) and the observation that a linear form 
$\ell = c_1 (x_1-a_1)^\ast + \cdots + c_n (x_n-a_n)^\ast$ in the dual vector
space of~$\M_1$ is exactly the evaluation at $p+(c_1,\dots,c_n)\in K^n$.
\end{proof}

In view of this proposition, the tasks to compute the Zariski cotangent and tangent
spaces of~$R$ at~$p$ are reduced to computing the $\M$-linear part of~$I$.
This can be done as follows.

\begin{proposition}\label{prop:linearpart}
As in the above setting, let $p=(a_1,\dots,a_n)\in K^n$, 
let $\M=\langle x_1-a_1,\dots, x_n-a_n\rangle$, let
$f_1,\dots,f_s\in\M$, and let $I=\langle f_1,\dots,f_s\rangle$.
Then we have
$$
\Lin_\M(I) \;=\; \langle \Lin_\M(f_1), \dots, \Lin_\M(f_s) \rangle_K 
$$
\end{proposition}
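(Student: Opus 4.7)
The plan is to establish the two inclusions of the claimed equality. The inclusion $\supseteq$ is immediate because each $\Lin_\M(f_i)$ lies in $\Lin_\M(I)$ by the very definition in Definition~\ref{def:cotangent}, and $\Lin_\M(I)$ is a $K$-vector space. So the content lies in the inclusion $\subseteq$, i.e., in showing that the $\M$-linear part of an arbitrary element of $I$ is a $K$-linear combination of the $\Lin_\M(f_i)$.

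To do this, I would first transport everything to the origin via $\phi_p$: set $J = \phi_p(I) = \langle \phi_p(f_1),\dots,\phi_p(f_s)\rangle$, and note that each $\phi_p(f_i)$ lies in $\N = \langle y_1,\dots,y_n\rangle$ because $f_i\in\M$. Writing $\phi_p(f_i) = \ell_i + q_i$ with $\ell_i \in \N_1$ and $q_i\in\N^2$, by construction $\ell_i = \Lin_\N(\phi_p(f_i))$ and $\psi_p(\ell_i) = \Lin_\M(f_i)$. Since $\psi_p$ is a $K$-linear bijection sending $\N_1$ to $\M_1$, it suffices to prove the analogous statement for $J$, namely that $\Lin_\N(J)$ is spanned over $K$ by $\ell_1,\dots,\ell_s$.

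Now pick an arbitrary $f\in I$ and write $f = \sum_{i=1}^s h_i f_i$ with $h_i\in P$. Applying $\phi_p$ and decomposing each $\phi_p(h_i) = c_i + r_i$ with $c_i\in K$ and $r_i\in\N$, I get
$$
\phi_p(f) \;=\; \sum_{i=1}^s (c_i+r_i)(\ell_i+q_i) \;=\; \sum_{i=1}^s c_i\,\ell_i \;+\; \Bigl(\sum_{i=1}^s c_i q_i + r_i\ell_i + r_i q_i\Bigr).
$$
The parenthesized remainder lies in $\N^2$, so the degree-one homogeneous component of $\phi_p(f)$ is exactly $\sum_{i=1}^s c_i\ell_i$. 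Thus $\Lin_\N(\phi_p(f)) = \sum_i c_i\ell_i \in \langle\ell_1,\dots,\ell_s\rangle_K$, and applying $\psi_p$ gives $\Lin_\M(f) = \sum_i c_i \Lin_\M(f_i)$, completing the proof.

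There is no real obstacle here; the only subtlety is the bookkeeping that splits each $\phi_p(h_i)$ into its constant term and its positive-degree part, so that the degree-one component of the product picks up only the constant term of $h_i$ multiplied against $\ell_i$. This mirrors the trick used already in part~(a) of the proof of Proposition~\ref{prop:cotancomp}, and once the change of coordinates by $\phi_p$ is in place the argument is a one-line grading computation.
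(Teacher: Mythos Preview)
Your proof is correct. The approach differs from the paper's in a small but genuine way: the paper invokes the characterization $\Lin_\M(I) = \M_1 \cap (I+\M^2)$ established in Proposition~\ref{prop:cotancomp}(a), rewrites $I+\M^2$ as $\langle \ell_1,\dots,\ell_s\rangle + \M^2$, and then argues via~$\phi_p$ that the degree-one part of this ideal is exactly the $K$-span of the~$\ell_i$. You bypass Proposition~\ref{prop:cotancomp}(a) entirely and work directly from the definition of $\Lin_\M(I)$ as a span: for an arbitrary $f=\sum h_i f_i\in I$ you split each coefficient $\phi_p(h_i)$ into constant plus higher-degree parts and read off the degree-one component of~$\phi_p(f)$ explicitly. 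The underlying grading computation is the same in both arguments; your route is a bit more elementary and self-contained, while the paper's route leverages the structural description of $\Lin_\M(I)$ already in hand. One trivial correction: the definition you cite for $\Lin_\M(I)$ is not Definition~\ref{def:cotangent} (that is the cotangent space); it is the subsequent definition introducing the $\M$-linear part.
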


\begin{proof}
For $i=1,\dots,s$, we write $f_i = \ell_i + g_i$ with $\ell_i=\Lin_\M(f_i)$
and $g_i \in \M^2$. Then we have $I+\M^2 = \langle \ell_1,\dots,
\ell_s\rangle + \M^2$ and therefore,  using
Proposition~\ref{prop:cotancomp}.a, we get
$\Lin_\M(I) = \M_1 \cap (I+\M^2) = 
\M_1 \cap (\langle \ell_1,\dots,\ell_s\rangle + \M^2)$. 
When we apply the isomorphism~$\phi_p$, we
see that the only homogeneous polynomials of degree one in 
the image of the ideal $\langle \ell_1,\dots,\ell_s\rangle + \M^2$
are the ones in the image of the vector space $\langle \ell_1, \dots, \ell_s\rangle_K$.
Thus we get $\Lin_\M(I)=\langle \ell_1, \dots, \ell_s\rangle_K$, as claimed.
\end{proof}

Notice that this proposition implies that Zariski tangent and 
cotangent spaces can be computed efficiently using any system of generators of~$I$.
If the ideal~$I$ is not contained in~$\M$, the conclusions of the proposition may fail,
as the following example shows.

\begin{example}\label{rex:nottruelinear}
Let $P=\QQ[x,y,z]$, let $\M=\langle x,y,z\rangle$, and let $I = \langle f_1, f_2\rangle$, 
where $f_1=x+1$ and $f_2 = y -z - z^2$.
Then $g = y f_1 -x f_2 = xz^2 +xz +y \in I$ implies that $y=\Lin_\M(g)$ is in the 
$\M$-linear part of $I$. However, $y$ is not contained in the ideal 
$\langle \Lin_\M(f_1),\; \Lin_\M(f_2)\rangle = \langle x,\; y-z \rangle$.
\end{example}

Let us use the above results to compute some cotangent and tangent spaces.

\begin{example}\label{ex:cotanglinear}
Let $P=\QQ[x,y,z]$, let $\M=\langle x,y,z\rangle$, and let $I=\langle f_1,f_2,f_3\rangle$,
where  $f_1= x^3 -x -z$, $f_2=y^2 +x$, and $f_3 = xy + z$. 
Then we get $\Lin_\M(I) = \langle \Lin_\M(f_1),\; \Lin_\M(f_2),\; \Lin_\M(f_3) \rangle =
\langle -x-z,\, x,\, z \rangle = \langle x,\, z\rangle$.
Hence the cotangent space of $R=P/I$ at $p=(0,0,0)$ is $\Cot_\m(R)=\m/\m^2 \cong  \langle x,y,z\rangle
/ \langle x,z,y^2\rangle \cong K\cdot \bar{y}$ and the tangent space is 
$\Tan_\m(R)= K \cdot (0,1,0)$. 
\end{example}

\begin{example}
Let $P=\QQ[x,y,z]$, let $\M=\langle x,y,z\rangle$, and let $I=\langle f_1,f_2\rangle$, where
$f_1 = x^3z -x$ and $f_2 = yz^2 - x$.
Then we have $\Lin_\M(I)= \langle x\rangle$. Therefore the cotangent space of~$R=P/I$
at the origin is the 2-dimensional vector space $\Cot_\m(R) = \m /\m^2 \cong \langle x,y,z\rangle /
\langle x,y^2,yz,z^2\rangle \cong K\cdot \bar{y} \oplus K\cdot \bar{z}$, and the tangent space is 
$\Tan_\m(R)= K\cdot (0,1,0) \oplus K\cdot (0,0,1)$.
\end{example}

\bigskip\bigbreak
%
%

\section{Separating Re-embeddings}
\label{Separating Re-embeddings}

In this section we continue to study affine $K$-algebras $R=P/I$,
where $P=K[x_1,\dots,x_n]$ is a polynomial ring over a field~$K$ and~$I$ is an ideal
in~$P$. The presentation $R=P/I$ corresponds to an embedding of the affine scheme
$\X=\Spec(R)$ into the affine $n$-space $\mathbb{A}^n_K$. Computing another presentation
$R \cong P'/I'$ with $P'=K[y_1,\dots,y_m]$ and an ideal~$I'$ in~$P'$ amounts to embedding~$\X$
into another affine space, preferably of lower dimension $m<n$.

Starting from this point, we assume that the linear maximal ideal under
consideration is $\M=\langle x_1,\dots,x_n\rangle$. We leave it to the interested reader
to generalize the subsequent results to arbitrary linear maximal ideals of~$P$ with the help
of Proposition~\ref{prop:linearpart}. Our goal is to construct alternative embeddings 
of~$\X$ using particular generators of the ideal~$I$.  
The main ideas and results of this section can be viewed as part of 
elimination theory which is explained in great detail for instance in Section 3.4 of~\cite{KR1}
(see Remark~\ref{rem:elimTO}).
The next definition provides a key construction.

\begin{definition}\label{def:separating}
In the above setting, let $\M=\langle x_1,\dots, x_n\rangle$, and let $f\in \M$. 
\begin{enumerate}
\item[(a)] The set of all indeterminates in $\{x_1,\dots,x_n\}$
which divide at least one of the terms in the support of~$f$ is called the
{\bf set of indeterminates} of~$f$ and is denoted by $\indets(f)$.

\item[(b)] Assuming $\Lin_\M(f)\ne 0$, let $z\in \indets(\Lin_\M(f))$, 
and let $c\in K\setminus \{0\}$ be the coefficient of~$z$ in~$f$. 
Then the polynomial $z - \frac{1}{c}\, f$ is called the
{\bf $z$-tail} of~$f$ and is denoted by $\tail_z(f)$.

\item[(c)] For every indeterminate $z\in \indets(\Lin_\M(f))$ such that
$z \notin \indets(\tail_z(f))$ we say that~$f$ is {\bf $z$-separating}.
\end{enumerate}
\end{definition}

The notion of being $z$-separating can be characterized as follows.

\begin{remark}\label{rem:sep=GB}
Let $f\in \M$ with $\Lin_\M(f)\ne 0$, and let $z\in \indets(\Lin_\M(f))$.
Then~$f$ is $z$-separating if and only if there exists a term ordering~$\sigma$
such that $z=\LT_\sigma(f)$. In this case the reduced $\sigma$-Gr\"obner basis of the
principal ideal $\langle f\rangle$ is $\{ \frac{1}{c}\, f \}$. 

Note that $\frac{1}{c}\, f = z - \tail_z(f)$ and that $\tail_z(f) \in \widehat{P}$,
where $\widehat{P}$ is the polynomial ring in the indeterminates $\{x_1,\dots,x_n\} \setminus \{z\}$
over~$K$. Consequently, the substitution $z \mapsto \tail_z(f)$ yields a $K$-algebra
isomorphism $P/\langle f\rangle \cong \widehat{P}$ which allows us to identify the
hypersurface $\Spec(P/\langle f\rangle )$ in~$\mathbb{A}^n_K$ with the affine 
space $\mathbb{A}^{n-1}_K$.
\end{remark}

The following example indicates a way how to use a separating polynomial
in order to find embeddings into affine spaces of lower dimension.

\begin{example}\label{ex:separating}
Let $P=\QQ[x,y,z]$, and let $I=\langle f,g \rangle$, where $f=y^3z -z^4 +2x$ 
and $g = z^2 -xy -y$. Clearly, the polynomial~$f$ is $x$-separating.
Let $\sigma$ be the elimination ordering ${\tt Elim}(x)$ (see~\cite{KR1}, Def.~1.4.10).
Then the reduced $\sigma$-Gr\"obner basis of~$I$ is
$$
\{ \; \tfrac{1}{2}\,f, \; y^4z -yz^4 +2z^2 -2y \} = 
\{\; x - (-\tfrac{1}{2}\, y^3z + \tfrac{1}{2}\, z^4),\;
y^4z -yz^4 +2z^2 -2y \}
$$
Hence~\cite{KR1}, Thm.~3.4.5, yields $I \cap \QQ[y,z] = \langle y^4z -yz^4 +2z^2 -2y\rangle$,
and the substitution $x\mapsto -\tfrac{1}{2}\, y^3z + \tfrac{1}{2}\, z^4$
induces a $\QQ$-algebra isomorphism 
$$
P/I \;\cong\; \QQ[y,z]/ \langle y^4z -yz^4 +2z^2 -2y\rangle
$$
Geometrically, we have embedded the subscheme $\Spec(P/I)$ of $\AA_\QQ^3$
into~$\AA_\QQ^2$.
\end{example}

This example motivates us to try to eliminate several indeterminates simultaneously.
To keep the notation manageable, we introduce the following setting.

\begin{notation}\label{not:YandZ}
The set of indeterminates of~$P$ is denoted by $X = \{x_1,\dots,x_n\}$.
Let $s\ge 1$. For $i=1,\dots,s$, we choose pairwise distinct indeterminates 
$z_i\in X$, and we let~$Z$ be the set $Z = \{ z_1,\dots,z_s \}$. 
Moreover, the remaining indeterminates $Y=X\setminus Z$
will be written as $Y = \{y_1,\dots,y_{n-s}\}$,
and we denote the polynomial ring in these indeterminates by $\widehat{P} = K[y_1,\dots,y_{n-s}]$.
Furthermore, given a term ordering~$\sigma$ on~$P$, its restriction to
${\widehat{P}}^{\mathstrut}$ is denoted by $\hat{\sigma}$.
\end{notation}

The following definition provides a condition for the simultaneous elimination of 
several indeterminates to work.

\begin{definition}\label{def:consistsep}
Let $s\ge 1$, let $f_1,\dots,f_s \in \M\setminus\{0\}$, and let 
$Z= \{z_1,\dots,z_s\}$ be a set of~$s$ distinct indeterminates in~$X$.
Then we say that the tuple $(f_1,\dots,f_s)$ is {\bf coherently $Z$-separating}
if the following two conditions are satisfied for every $i\in \{1,\dots,s\}$.
\begin{enumerate}
\item[(1)] The polynomial $f_i$ is $z_i$-separating.

\item[(2)] For $j\ne i$, we have $z_i \notin \indets(f_j)$.
\end{enumerate}
\end{definition}

Generalizing Remark~\ref{rem:sep=GB}, we can characterize coherently $Z$-separating tuples
of polynomials as follows.

\begin{proposition}\label{prop:partofmin}

Let $s\ge 1$, let $f_1,\dots,f_s \in \M\setminus\{0\}$, and let $Z = \{z_1,\dots,z_s\}$ be a set
of~$s$ distinct indeterminates in~$X$. Then the following conditions are equivalent.
\begin{enumerate}
\item[(a)] The tuple $(f_1,\dots,f_s)$ is coherently $Z$-separating.

\item[(b)] For every term ordering~$\sigma$ such that $z_i=\LT_\sigma(f_i)$ for $i=1,\dots,s$,
the reduced $\sigma$-Gr\"obner basis of $\langle f_1,\dots,f_s\rangle$ 
is of the form $\{ \frac{1}{c_1}\, f_1,\dots, \frac{1}{c_s}\,f_s \}$, where $c_i = \LC_\sigma(f_i)$ for $i=1,\dots,s$.

\item[(c)] For every proper ideal~$I$ in~$P$ containing $f_1,\dots,f_s$ and for every
term ordering~$\sigma$ such that $z_i=\LT_\sigma(f_i)$ for $i=1,\dots,s$, the ideal~$I$ 
has a $\sigma$-Gr\"obner basis of the form
$\{ \frac{1}{c_1}\, f_1,\dots, \frac{1}{c_s}\,f_s,\, g_1,\dots,g_t \}$, where $c_i = \LC_\sigma(f_i)$ 
for $i=1,\dots,s$, where the reduced $\sigma$-Gr\"obner basis of $\langle f_1,\dots,f_s\rangle$ 
is $\{ \frac{1}{c_1}\, f_1,\dots, \frac{1}{c_s}\,f_s \}$,
and where $\{g_1,\dots,g_t \}$ is the reduced $\hat{\sigma}$-Gr\"obner basis 
of $I\cap \widehat{P}$.
\end{enumerate}

\end{proposition}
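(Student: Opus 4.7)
My plan is to prove (a)$\Rightarrow$(b), (b)$\Rightarrow$(a), and (a)$\Rightarrow$(c), with (c)$\Rightarrow$(b) being immediate because the statement of~(c) explicitly contains the statement of~(b).

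For (a)$\Rightarrow$(b), the two defining conditions of coherent $Z$-separation together say that $\tail_{z_i}(f_i)$ involves no $z_j$ whatsoever: condition~(1) removes $z_i$ itself, and condition~(2) forbids any $z_j$ with $j\ne i$ from appearing in $f_i$, and hence in its tail. So $t_i := \tail_{z_i}(f_i) \in \widehat{P}$, and $\tfrac{1}{c_i}f_i = z_i - t_i$. For any term ordering $\sigma$ with $\LT_\sigma(f_i)=z_i$ for all $i$, the leading terms $z_1,\dots,z_s$ are pairwise coprime monomials, so Buchberger's coprime-leading-terms criterion makes all S-polynomials reduce to zero and $\{\tfrac{1}{c_i}f_i\}$ is a $\sigma$-Gr\"obner basis of $\langle f_1,\dots,f_s\rangle$. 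It is the reduced one because each $\tfrac{1}{c_i}f_i$ is monic and each tail $-t_i$ sits in $\widehat{P}$, so no $z_j$ divides any of its terms.

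For (b)$\Rightarrow$(a), I would pick a term ordering $\sigma$ with $\LT_\sigma(f_i)=z_i$ (at least one such $\sigma$ exists under the intended reading of~(b)). From $\LT_\sigma(\tfrac{1}{c_i}f_i)=z_i$ one reads that $z_i$ appears as a linear monomial in $f_i$ with nonzero coefficient, so $z_i \in \indets(\Lin_\M(f_i))$. Reducedness of $\{\tfrac{1}{c_i}f_i\}$ then forces no term of the tail $\tfrac{1}{c_i}f_i - z_i = -t_i$ to be divisible by any $z_j$, i.e., $t_i \in \widehat{P}$. This simultaneously gives condition~(1) ($z_i \notin \indets(t_i)$, so $f_i$ is $z_i$-separating) and condition~(2) (because $f_i = c_i z_i - c_i t_i$ is manifestly free of every $z_j$ with $j\ne i$).

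The substantive content is (a)$\Rightarrow$(c). Using (a)$\Rightarrow$(b), we already know that $\{\tfrac{1}{c_i}f_i\}$ is the reduced $\sigma$-Gr\"obner basis of $J := \langle f_1,\dots,f_s\rangle$. Letting $\{g_1,\dots,g_t\}$ be the reduced $\hat\sigma$-Gr\"obner basis of $I\cap\widehat{P}$, I would show that $G := \{\tfrac{1}{c_i}f_i\} \cup \{g_1,\dots,g_t\} \subseteq I$ is a $\sigma$-Gr\"obner basis of~$I$. Given $g \in I$, run the division algorithm on $g$ by $\{\tfrac{1}{c_i}f_i\}$: since each $t_i \in \widehat{P}$, no reduction step introduces a new $z_j$, and termination forces the remainder $g'$ to have no term divisible by any $z_i$, so $g' \in \widehat{P}$; moreover $g' \in g + J \subseteq I$, hence $g' \in I\cap\widehat{P}$. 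If $\LT_\sigma(g)$ is divisible by some $z_i$, it lies in $\LT_\sigma(G)$ at once; otherwise the reduction leaves $\LT_\sigma(g)$ untouched, so $\LT_\sigma(g) = \LT_{\hat\sigma}(g') \in \langle \LT_{\hat\sigma}(g_1),\dots,\LT_{\hat\sigma}(g_t) \rangle \subseteq \LT_\sigma(G)$. Either way $\LT_\sigma(g) \in \LT_\sigma(G)$, so $G$ is a Gr\"obner basis. The main obstacle is precisely this verification, because $\sigma$ need not be an elimination ordering for $Z$; what rescues the argument is the structural fact guaranteed by coherent $Z$-separation, namely that every tail $t_i$ lives in $\widehat{P}$, so reduction by $\{\tfrac{1}{c_i}f_i\}$ behaves exactly like the substitution $z_i \mapsto t_i$ even when $\sigma$ is not eliminating.
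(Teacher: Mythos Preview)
Your proof is correct. The implications (a)$\Leftrightarrow$(b) and (c)$\Rightarrow$(b) match the paper's argument essentially verbatim; the only real difference is in how the passage to~(c) is handled.

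The paper proves (b)$\Rightarrow$(c) by starting from a minimal $\sigma$-Gr\"obner basis of~$I$, observing that $z_1,\dots,z_s$ must appear among its leading terms, and then reducing the remaining basis elements $\tilde{g}_j$ modulo $(f_1,\dots,f_s)$ so that they land in~$\widehat{P}$; interreducing the results yields the reduced $\hat\sigma$-Gr\"obner basis of $I\cap\widehat{P}$. You instead prove (a)$\Rightarrow$(c) in the opposite direction: you take the reduced $\hat\sigma$-Gr\"obner basis $\{g_1,\dots,g_t\}$ of $I\cap\widehat{P}$ as given and verify directly that $\{\tfrac{1}{c_i}f_i\}\cup\{g_j\}$ generates $\LT_\sigma(I)$, via a case split on whether $\LT_\sigma(g)$ is divisible by some~$z_i$. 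Both arguments hinge on the same technical point---that division by the tuple $(\tfrac{1}{c_1}f_1,\dots,\tfrac{1}{c_s}f_s)$ sends~$I$ into $I\cap\widehat{P}$ because every tail $t_i$ already lies in~$\widehat{P}$---so the difference is organizational rather than substantive. Your version has the mild advantage of making it explicit why no elimination hypothesis on~$\sigma$ is needed; the paper's version has the mild advantage of exhibiting $\{g_1,\dots,g_t\}$ constructively from a Gr\"obner basis of~$I$ rather than invoking its existence abstractly.
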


\begin{proof}
First we show that (a) implies~(b). Let~$\sigma$ be a term ordering such that $\LT_\sigma(f_i)=z_i$
for $i=1,\dots,s$. Then the hypothesis implies that~$z_i$ does not divide any term in
the support of $\tail_{z_i}(f_i)$ or in the support of one of the polynomials $f_j$ with $j\ne i$.
Since the leading terms $z_i=\LT_\sigma(f_i)$ are pairwise coprime, the set 
$\{ \frac{1}{c_1}\, f_1,\dots, \frac{1}{c_s}\,f_s \}$ is a $\sigma$-Gr\"obner basis 
of $\langle f_1, \dots, f_s \rangle$. 
Using $z_i \notin \indets(f_j)$ for $i\ne j$, we see that this set is fully interreduced, 
and therefore it is the reduced $\sigma$-Gr\"obner basis  of $\langle f_1, \dots, f_s \rangle$.

Next we show that~(b) implies~(c).  Using $z_i=\LT_\sigma(f_i)$ for $i=1,\dots,s$ and the hypothesis
that~$I$ is not the unit ideal in~$P$, it follows that the indeterminates $z_1, \dots, z_s$ are part of a 
minimal system of generators of $\LT_\sigma(I)$. Hence the set 
$\{ \frac{1}{c_1}\, f_1,\dots, \frac{1}{c_s}\,f_s \}$ is part of a 
minimal $\sigma$-Gr\"obner basis of~$I$. Let $\{\tilde{g}_1,\dots,\tilde{g}_t \}$ be the
remaining elements of this minimal Gr\"obner basis. As the leading terms $\LT_\sigma(\tilde{g}_i)$
are not divisible by any indeterminate in~$Z$, we can replace $\tilde{g}_i$ by their normal
remainders $\hat{g}_i$ with respect to division by $(f_1,\dots,f_s)$. Then we have $\hat{g}_i
\in \widehat{P}$ for $i=1,\dots,t$. Finally, we can divide the polynomials $\hat{g}_i$ by their
leading coefficients and interreduce them to get the reduced $\hat{\sigma}$-Gr\"obner basis
$\{g_1,\dots,g_t\}$ of $I \cap \widehat{P}$.

The implication  (c)$\Rightarrow$(b) is clear, and the implication (b)$\Rightarrow$(a) follows directly
from the definition of a reduced Gr\"obner basis. Thus the proof is complete.
\end{proof}

The following remark says that suitable term orderings always exist.

\begin{remark}\label{rem:elimTO}
Notice that, for a coherently $Z$-separating tuple $(f_1,\dots,f_s)$, 
any elimination ordering~$\sigma$ for~$Z$ has the property $\LT_\sigma(f_i)=z_i$
required by conditions~(b) and~(c) of the proposition. 
\end{remark}

The next example illustrates the proposition.

\begin{example}\label{ex:cohersep}
Let $P = \QQ[x,y,z]$, and let $I=\langle f_1,f_2\rangle$, where 
$f_1 = x^2 -x -y$ and $f_2 = y^2 -z$.
\begin{enumerate}
\item[(a)] The polynomial~$f_1$ is $y$-separating and~$f_2$ is $z$-separating.
However, the tuple $(f_1, f_2)$ is not coherently $Z$-separating 
with respect to $Z=\{y, z\}$, since~$y$ appears in the $z$-tail of~$f_2$.

\item[(b)] Let $\sigma$ be the term ordering ${\tt Elim}(y,z)$.
Then the reduced $\sigma$-Gr\"obner basis of~$I$ is $\{-f_1, g\}$,
where $g = z -(x^2 -x )^2$. Clearly, the tuple $(f_1, g)$ is coherently $Z$-separating
with respect to $Z=\{y, z\}$.
\end{enumerate}
\end{example}

Proposition~\ref{prop:partofmin} suggests the following definition.

\begin{definition}\label{def:sepGB}
Let $s\ge 1$, let $Z = \{z_1,\dots,z_s\}$ be a set of~$s$ distinct indeterminates in~$X$,
and let $I\subseteq \M$ be an ideal.
\begin{enumerate}
\item[(a)] A term ordering~$\sigma$ on~$\mathbb{T}^n$ is called a {\bf $Z$-separating term ordering} 
for~$I$ if there exist polynomials $f_1,\dots,f_s \in I \setminus \{0\}$ such that
$z_i=\LT_\sigma(f_i)$ for $i=1,\dots,s$ and $(f_1,\dots,f_s)$ is coherently $Z$-separating.

\item[(b)] Let~$\sigma$ be a $Z$-separating term ordering for~$I$ on~$\mathbb{T}^n$.
Then a $\sigma$-Gr\"obner basis of~$I$ is called a {\bf $Z$-separating $\sigma$-Gr\"obner basis} 
of~$I$ if it is of the form $\{ \frac{1}{c_1}\, f_1,\dots, \frac{1}{c_s}\,f_s, g_1,\dots,g_t \}$
where $(f_1,\dots,f_s)$ is a tuple of coherently $Z$-separating polynomials, 
where we have $c_i = \LC_\sigma(f_i)$ and $z_i=\LT_\sigma(f_i)$ for $i=1,\dots,s$, and where 
$\{g_1,\dots,g_t \}$ is the reduced $\hat{\sigma}$-Gr\"obner basis 
of $I\cap \widehat{P}$.
\end{enumerate}
\end{definition}

In polynomial system solving, the term ordering {\tt Lex} frequently acts as a separating term
ordering, as in the following example.

\begin{example}
Let $K$ be a perfect field, let $P=K[x_1,\dots,x_n]$, and let $I\subseteq \M$ 
be a 0-dimensional radical ideal in normal $x_n$-position. Then the Shape Lemma (see~\cite{KR1},
Thm.~3.7.25) says that $\sigma={\tt Lex}$ is a $Z$-separating term ordering for~$I$ when 
$Z = \{x_1,\dots,x_{n-1}\}$.
\end{example}

Notice that a $Z$-separating Gr\"obner basis consists of two parts, both of which are
reduced Gr\"obner bases. However, the full set need not be the reduced $\sigma$-Gr\"obner
basis of~$I$, as the following example shows.

\begin{example}
Let $P = \mathbb{Q}[x,y,z]$, let $F = \{x-y^2\}$, let $I = \langle x-y^2,\; y^2 -z^3 \rangle$, 
and let $\sigma = \tt{Lex}$. Then $F$ is the reduced $\sigma$-Gr\"obner basis of $\langle F\rangle$ 
and $\{y^2 -z^3\}$ is the reduced $\hat{\sigma}$-Gr\"obner basis of $I\cap \mathbb{Q}[y, z]$. 
Thus $\{ x-y^2,\; y^2-z^3\}$ is a $Z$-separating {\tt Lex}-Gr\"obner basis of~$I$ 
for $Z=\{x\}$, but it is not equal to the reduced {\tt Lex}-Gr\"obner basis $\{x-z^3,\; y^2-z^3\}$ of~$I$.
\end{example}

The exact relation between a $Z$-separating Gr\"obner basis and the corresponding reduced
Gr\"obner basis of an ideal is given by the following proposition.

\begin{proposition}\label{prop:charSepGB}
In the setting of Definition~\ref{def:sepGB}, let $\sigma$ be a $Z$-separating term 
ordering for an ideal $I \subset \M$.
\begin{enumerate}
\item[(a)] The reduced $\sigma$-Gr\"obner basis of~$I$ is a $Z$-separating
$\sigma$-Gr\"obner basis of~$I$.

\item[(b)] Let $\{ \frac{1}{c_1}\, f_1,\dots, \frac{1}{c_s}\,f_s, g_1,\dots,g_t \}$
be a $Z$-separating $\sigma$-Gr\"obner basis of~$I$. For $i=1,\dots,s$, let
$h_i = \NF_{\hat{\sigma},I\cap\widehat{P}}(\tail_{z_i}(f_i))$. Then the reduced $\sigma$-Gr\"obner
basis of~$I$ is given by $H=\{ z_1 - h_1, \dots, z_s-h_s, g_1,\dots,g_t\}$.
\end{enumerate}
\end{proposition}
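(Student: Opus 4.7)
The plan is to prove part (b) first by direct construction, and then deduce part (a) by combining part (b) with Proposition~\ref{prop:partofmin} and the defining property of a $Z$-separating term ordering.

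For part (b), I would start from a $Z$-separating $\sigma$-Gröbner basis $\{\tfrac{1}{c_1}f_1,\dots,\tfrac{1}{c_s}f_s, g_1,\dots,g_t\}$ of $I$, noting that $\tfrac{1}{c_i}f_i = z_i - \tail_{z_i}(f_i)$ with $\tail_{z_i}(f_i) \in \widehat{P}$ (by the coherent $Z$-separating condition). Since $h_i = \NF_{\hat\sigma, I\cap\widehat{P}}(\tail_{z_i}(f_i))$ is obtained by reductions inside $\widehat{P}$, we have $h_i \in \widehat{P}$ and $\tail_{z_i}(f_i) - h_i \in I\cap\widehat{P}\subseteq I$. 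Consequently $z_i - h_i = \tfrac{1}{c_i}f_i + (\tail_{z_i}(f_i) - h_i) \in I$, so $H \subseteq I$. Next I would verify that $H$ is a $\sigma$-Gröbner basis: since $\LT_\sigma(z_i - h_i) = z_i = \LT_\sigma(\tfrac{1}{c_i}f_i)$ and the $g_j$ are unchanged, the leading term ideal generated by $H$ equals $\LT_\sigma(I)$. Finally, to check reducedness, each $z_i - h_i$ is monic; no term of $h_i$ is divisible by any $z_j$ (as $h_i \in \widehat{P}$) nor by any $\LT_{\hat\sigma}(g_j)$ (as $h_i$ is a $\hat\sigma$-normal form modulo $I\cap\widehat{P}$); the $g_j$ form the reduced $\hat\sigma$-Gröbner basis of $I\cap\widehat{P}$ and involve no $z_i$, so they are untouched by reductions with the $z_i - h_i$.

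For part (a), the definition of a $Z$-separating term ordering provides a witnessing coherently $Z$-separating tuple $(f_1,\dots,f_s)$ in $I$ with $z_i = \LT_\sigma(f_i)$. Proposition~\ref{prop:partofmin}((a)$\Rightarrow$(c)) then yields a $Z$-separating $\sigma$-Gröbner basis of $I$, and applying part (b) to it produces the reduced $\sigma$-Gröbner basis $H = \{z_1-h_1,\dots,z_s-h_s,g_1,\dots,g_t\}$. Since $h_i \in \widehat{P}$, the tuple $(z_1-h_1,\dots,z_s-h_s)$ satisfies both conditions of Definition~\ref{def:consistsep} (it is $z_i$-separating via Remark~\ref{rem:sep=GB}, and $z_i \notin \indets(z_j - h_j)$ for $j\ne i$), and $\{g_1,\dots,g_t\}$ is by construction the reduced $\hat\sigma$-Gröbner basis of $I\cap\widehat{P}$. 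This is precisely the shape required in Definition~\ref{def:sepGB}(b).

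The main obstacle I anticipate is in (b), where the replacement of each $\tail_{z_i}(f_i)$ by $h_i$ must simultaneously preserve membership in $I$, keep leading terms intact (so the Gröbner basis property persists), maintain the constraint $h_i \in \widehat{P}$ (so the result remains $Z$-separating), and achieve full interreduction. The unifying observation that makes all four properties hold at once is that normal-form reduction modulo $I\cap\widehat{P}$ uses only elements of $\widehat{P}$ whose leading terms are disjoint from $\{z_1,\dots,z_s\}$; without this disjointness, one of the three reduction conditions would fail.
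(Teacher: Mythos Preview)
Your argument is correct. For part~(b) you follow essentially the same reasoning as the paper: replace each $\tail_{z_i}(f_i)$ by its normal form $h_i$ with respect to $\{g_1,\dots,g_t\}$, observe that this preserves the leading terms (hence the Gr\"obner basis property) and achieves full interreduction. The paper states this more tersely, simply noting that it suffices to reduce the tails of the $f_i$'s using the $g_j$'s, but the content is the same.

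For part~(a) you take a genuinely different route. The paper argues directly: since $\sigma$ is $Z$-separating, the indeterminates $z_1,\dots,z_s$ are minimal generators of $\LT_\sigma(I)$, so the reduced $\sigma$-Gr\"obner basis automatically contains elements with these leading terms; full interreduction then forces the coherent $Z$-separation conditions and places the remaining elements in~$\widehat{P}$ (the paper leaves implicit the standard elimination fact that these remaining elements form the reduced $\hat\sigma$-Gr\"obner basis of $I\cap\widehat{P}$). You instead invoke Proposition~\ref{prop:partofmin}(c) to first \emph{construct} a $Z$-separating $\sigma$-Gr\"obner basis, apply your part~(b) to obtain the reduced one in the explicit form~$H$, and then verify that~$H$ satisfies Definition~\ref{def:sepGB}(b). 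Your detour has the virtue of making the shape of the reduced basis explicit and of getting the elimination-ideal statement for free from Proposition~\ref{prop:partofmin}; the paper's direct argument is shorter and does not require proving~(b) first.
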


\begin{proof}
First we show~(a). Since~$\sigma$ is a coherently $Z$-separating term ordering, 
we know that $\{z_1,\dots,z_s\}$ is part of a minimal set of generators of $\LT_\sigma(I)$.
Hence the reduced $\sigma$-Gr\"obner basis~$G$ of~$I$ is of the form 
$G=\{ f_1,\dots,f_s, g_1,\dots,g_t\}$ with $\LC_\sigma(f_i)=1$ and $\LT_\sigma(f_i)=z_i$ 
for $i=1,\dots,s$, and with $\LT_\sigma(g_j)\in \widehat{P}$ for $j=1,\dots,t$. 
Since~$G$ is fully interreduced, we have $z_i\notin \indets(\tail_{z_i}(f_i))$
and $z_i \notin \indets (f_j)$ for $i=1,\dots,s$ and $j\ne i$, as well as
$z_i \notin \indets(g_k)$ for $k=1,\dots,t$. Therefore~$G$ is a $Z$-separated $\sigma$-Gr\"obner
basis of~$I$.

To prove~(b), it suffices to show that~$H$ is fully interreduced. Using $g_j\in \widehat{P}$
for $j=1,\dots,t$, we know that $g_j$ cannot be reduced by any element in $F=\{\frac{1}{c_1}f_1,
\dots, \frac{1}{c_s}f_s\}$. Furthermore, as~$F$ is fully interreduced, it remains to reduce
the elements of~$F$ with respect to $\widehat{G} = \{g_1,\dots,g_t\}$.
This is achieved by replacing $\tail_{z_i}(f_i)$ by 
$\NF_{\hat{\sigma},I\cap\widehat{P}}(\tail_{z_i}(f_i))$ which is its normal remainder 
under division by~$\widehat{G}$. Altogether, it follows that~$H$ is the reduced 
$\sigma$-Gr\"obner basis of~$I$.
\end{proof}

Now we are ready to prove the main result of this section. It shows that
a $Z$-separating Gr\"obner basis allows us to embed the affine scheme defined by
a polynomial ideal into a lower dimensional affine space.
Recall that, for a set of distinct indeterminates $Z = \{z_1,\dots,z_s\}$ in~$X$, 
Notation~\ref{not:YandZ} provides a disjoint union
$X = Y \cup Z$, and that we let $\widehat{P}=K[Y] = K[y_1,\dots,y_{n-s}]$.

\begin{theorem}{\bf (The $Z$-Separating Re-embedding)}\label{thm:goodIso}\\
Let $I\subseteq \M$ be an ideal, let $Z = \{z_1,\dots,z_s\}$ be a set of distinct 
indeterminates in~$X$, let~$\sigma$ be a $Z$-separating term ordering for~$I$,
and let $\{ \frac{1}{c_1}\, f_1,\dots, \frac{1}{c_s}\,f_s,\allowbreak g_1,\dots,g_t \}$
be a $Z$-separating Gr\"obner basis of~$I$, where $(f_1,\dots,f_s)$ is a coherently
$Z$-separated tuple of polynomials, where $c_i=\LT_\sigma(f_i)$ and $z_i=\LT_\sigma(f_i)$
for $i=1,\dots,s$, and where $\{g_1,\dots,g_t\}$ is the reduced $\hat{\sigma}$-Gr\"obner 
basis of $I\cap\widehat{P}$.
\begin{enumerate}
\item[(a)] Let~$\phi$ be the $K$-algebra homomorphism $\phi:\; P \longrightarrow
\widehat{P}$ given by $x_i\mapsto y_j$ if $y_j=x_i$ for some $j\in\{1,\dots,n-s\}$
and by $x_i\mapsto \tail_{z_j}(f_j)$ if $z_j=x_i$ for some $j\in\{1,\dots,s\}$.
Then~$\phi$ induces an isomorphism of $K$-algebras
$$
\Phi:\; P/I \;\longrightarrow\; \widehat{P} / (I\cap \widehat{P})
$$
which is defined by $\Phi(x_i+I) = \phi(x_i) + (I\cap \widehat{P})$ for $i=1,\dots,n$.

\item[(b)] The map $\Phi^{-1}:\; \widehat{P} / (I\cap \widehat{P}) \longrightarrow P/I$
is given by $y_i + (I\cap \widehat{P}) \mapsto y_i+I$ for $i=1,\dots,n-s$.

\item[(c)] The maps $\Phi$ and $\Phi^{-1}$ do not depend on the choice of a $Z$-separating
Gr\"obner basis of~$I$. In particular, we may use the reduced $\sigma$-Gr\"obner basis of~$I$
to define them.
\end{enumerate}
\end{theorem}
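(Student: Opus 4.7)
The plan is to verify (a) by constructing the induced map $\Phi$ and its inverse directly, (b) by reading off the inverse from the construction, and (c) by a short residue class argument. The core observation that makes everything run is the congruence $x_i \equiv \phi(x_i) \pmod{I}$ for each $i \in \{1,\dots,n\}$: if $x_i = y_j$ then $\phi(x_i) = y_j$ and the difference is zero, while if $x_i = z_j$ then $x_i - \phi(x_i) = z_j - \tail_{z_j}(f_j) = \frac{1}{c_j} f_j \in I$. By induction on degree and linearity, this promotes to $f \equiv \phi(f) \pmod{I}$ for every $f \in P$, where $\phi(f) \in \widehat{P}$ is viewed as an element of $P$ via the inclusion $\widehat{P}\subseteq P$.

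For part (a), I would first check that $\phi$ sends $I$ into $I \cap \widehat{P}$, so that $\Phi$ is well defined. Since the $Z$-separating Gr\"obner basis $\{\frac{1}{c_1}f_1,\dots,\frac{1}{c_s}f_s, g_1,\dots,g_t\}$ generates $I$, it suffices to check the generators. For each $f_i$ I compute $\phi(f_i) = \phi(z_i) - \phi(\tail_{z_i}(f_i)) = \tail_{z_i}(f_i) - \tail_{z_i}(f_i) = 0$, using the fact that coherence guarantees $\tail_{z_i}(f_i) \in \widehat{P}$ and contains no indeterminate from $Z$, hence $\phi$ fixes it. For each $g_j \in \widehat{P}$, the map $\phi$ acts as the identity, so $\phi(g_j) = g_j \in I \cap \widehat{P}$. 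Thus $\phi(I) \subseteq I \cap \widehat{P}$ and $\Phi$ is a well-defined $K$-algebra homomorphism. Surjectivity of $\Phi$ is immediate because for $y_j \in \widehat{P}$ we have $\Phi(y_j + I) = y_j + (I\cap \widehat{P})$. For injectivity, suppose $\Phi(f + I) = 0$, i.e., $\phi(f) \in I \cap \widehat{P}$. By the congruence above, $f \equiv \phi(f) \pmod{I}$, and since $\phi(f) \in I$, we conclude $f \in I$.

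For part (b), the inclusion $\widehat{P} \hookrightarrow P$ sends $I \cap \widehat{P}$ into $I$ and therefore induces a $K$-algebra homomorphism $\bar\iota \colon \widehat{P}/(I\cap\widehat{P}) \to P/I$ with $y_i + (I\cap\widehat{P}) \mapsto y_i + I$. A direct computation shows $\Phi \circ \bar\iota = \mathrm{id}$ on the $y_i$, since $\phi(y_i) = y_i$. Conversely, $\bar\iota \circ \Phi$ maps $x_i + I$ to $\phi(x_i) + I$, which equals $x_i + I$ by the congruence $x_i \equiv \phi(x_i) \pmod{I}$. Since both compositions agree with the identity on a set of $K$-algebra generators, $\bar\iota = \Phi^{-1}$.

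For part (c), the independence is essentially built in: the formula for $\Phi^{-1}$ in (b) never referred to the chosen Gr\"obner basis, so $\Phi^{-1}$, and hence $\Phi$, is intrinsic. Alternatively, if $(f_1,\dots,f_s)$ and $(f_1',\dots,f_s')$ are two coherently $Z$-separating tuples arising from two $Z$-separating Gr\"obner bases (with the same leading terms $z_i$), then $\tail_{z_i}(f_i) - \tail_{z_i}(f_i') = \frac{1}{c_i'}f_i' - \frac{1}{c_i}f_i \in I$, and both tails lie in $\widehat{P}$, so their difference lies in $I \cap \widehat{P}$. Hence the two resulting maps $\Phi$ agree modulo $I \cap \widehat{P}$. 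I expect the only subtle step to be verifying that $\phi(I) \subseteq I \cap \widehat{P}$ — it is essential here to know that $\{g_1,\dots,g_t\}$ is a Gr\"obner basis of $I \cap \widehat{P}$, not merely a subset; this is exactly what the hypothesis of a $Z$-separating Gr\"obner basis provides via Proposition~\ref{prop:partofmin}.
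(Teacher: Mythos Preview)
Your proof is correct and follows the same overall structure as the paper's: both check well-definedness by computing $\phi(f_i)=0$ and $\phi(g_j)=g_j$ on generators, and both get surjectivity from $\phi(y_j)=y_j$. The difference lies in the injectivity step for~(a) and in~(c). For injectivity, the paper takes $h\in\Ker(\psi)$, passes to the normal form $\tilde h=\NF_{\sigma,I}(h)\in\widehat P$, and then uses $\psi(\tilde h)=0$ together with $\phi\vert_{\widehat P}=\mathrm{id}$ to conclude $\tilde h\in I$; your route via the congruence $f\equiv\phi(f)\pmod I$ bypasses normal forms entirely and is a bit more direct. For~(c), the paper compares an arbitrary $Z$-separating basis to the reduced $\sigma$-Gr\"obner basis using Proposition~\ref{prop:charSepGB}, whereas your first argument---that the formula for $\Phi^{-1}$ in~(b) makes no reference to the chosen basis---settles independence in one line. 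Both approaches are sound; yours trades the Gr\"obner-basis machinery for the single algebraic identity $x_i-\phi(x_i)\in I$, which makes the inverse map visibly canonical.
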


\begin{proof}
To prove~(a), we consider the composition~$\psi$ of~$\phi$ with the canonical
epimorphism $\widehat{P} \longrightarrow \widehat{P}/(I\cap \widehat{P})$ and
show that $I = \Ker(\psi)$. To prove the inclusion $\subseteq$, we note that
$\frac{1}{c_i}\, f_i = z_i - \tail_{z_i}(f_i)$, and that 
$\tail_{z_i}(f_i) \in \widehat{P}$ implies 
$$
\psi(\tfrac{1}{c_i}f_i) = 
\tail_{z_i}(f_i) - \tail_{z_i}(f_i) + (I\cap \widehat{P}) = 0
$$
for $i=1,\dots,s$. Furthermore, since $g_j\in I \cap \widehat{P}$, we also
have $\psi(g_j)=0$ for $j=1,\dots,t$.

To show the inclusion $\supseteq$, we take a polynomial $h\in \Ker(\psi)$
and note that $\tilde{h} = \NF_{\sigma,I}(h) \in \widehat{P}$.
Hence there exist $k_1,\dots,k_s,\ell_1,\dots,\ell_t \in P$ such that 
$h = k_1 f_1 + \cdots + k_s f_s + \ell_1 g_1 + \cdots + \ell_t g_t + \tilde{h}$.
Since we have shown already that $\psi(f_i)=\psi(g_j)=0$, it follows that
$\psi(\tilde{h})=0$, and hence $\tilde{h} \in I$. Altogether, we get $h\in I$, as 
claimed.

Using the equality $I=\Ker(\psi)$, we see that~$\psi$ induces an injective
$K$-algebra homomorphism $\Phi:\; P/I \;\longrightarrow\; \widehat{P} / (I\cap \widehat{P})$
which is also surjective, because $\psi(y_i)= y_i + (I\cap\widehat{P})$ for $i=1,\dots,n-s$.

Claim~(b) follows immediately from~(a). To show (c), it suffices to prove that the map~$\Phi$
defined via the set $\{ \frac{1}{c_1}\, f_1,\dots, \frac{1}{c_s}\,f_s,\allowbreak g_1,\dots,g_t \}$ 
agrees with the map $\Psi$ defined analogously via the reduced 
$\sigma$-Gr\"obner basis~$G$ of~$I$, because then an application of Proposition~\ref{prop:charSepGB}.a
finishes the proof. By part~(b) of this proposition, the set~$G$ is of the form
$G = \{ f'_1, \dots,f'_s, g_1,\dots,g_t\}$  where $f'_i = z_i - h_i$ and
$h_i = \NF_{\hat{\sigma},I\cap\widehat{P}}(\tail_{z_i}(f_i))$ for $i=1,\dots,s$.
For an indeterminate $x_i$ such that $i\in \{1,\dots,n\}$ and $x_i=z_j$ with $j\in \{1,\dots,s\}$,
it follows that 
$$
\Phi(x_i) \;=\; \tail_{z_j}(f_j) + I\cap \widehat{P} \;=\; h_j + I \cap \widehat{P} = \Psi(x_i)
$$
because the polynomial $\tail_{z_j}(f_j)$ and its normal form with respect to $I \cap \widehat{P}$
differ by an element of $I \cap \widehat{P}$. Consequently, we get $\Phi=\Psi$, as claimed.
\end{proof}

The isomorphism~$\Phi$ constructed in part~(a) of this theorem seems to depend on the
fact that we fixed a $Z$-separating term ordering~$\sigma$ for the given ideal~$I$. Our next proposition
shows that it is in fact independent of the choice of~$\sigma$.

\begin{proposition}\label{prop:equivZsepGB}
Let~$I\subseteq \M$ be an ideal in~$P$, let $Z=\{z_1,\dots,z_s\}$ be a set of
distinct indeterminates in~$\X$, and let $\sigma,\sigma'$ be $Z$-separating term
orderings for~$I$.  Then the corresponding
$K$-algebra isomorphisms $\Phi,\Phi':\; P/I \longrightarrow \widehat{P} / (I\cap\widehat{P})$
are equal.
\end{proposition}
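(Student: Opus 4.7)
The plan is to exploit Theorem~\ref{thm:goodIso}(b), which says that the inverse map $\Phi^{-1}: \widehat{P}/(I\cap \widehat{P}) \longrightarrow P/I$ is always given by $y_i + (I\cap\widehat{P}) \mapsto y_i + I$, independently of any term ordering. Concretely, $\Phi^{-1}$ is induced by the canonical inclusion $\widehat{P} \hookrightarrow P$ followed by the quotient $P \twoheadrightarrow P/I$, and this construction involves no Gröbner basis data at all. Hence if $\sigma$ and $\sigma'$ are both $Z$-separating term orderings for $I$, the associated isomorphisms $\Phi$ and $\Phi'$ share the same inverse, and therefore $\Phi = \Phi'$.

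To make this completely self-contained, I would also give a direct verification on generators. For $x_i = y_j \in Y$, both $\Phi$ and $\Phi'$ send $y_j + I$ to $y_j + (I\cap \widehat{P})$ by construction, so agreement is automatic. For $x_i = z_j \in Z$, let $f_j$ and $f'_j$ be the elements of the respective $Z$-separating Gröbner bases with leading term $z_j$. Then by definition
\[
\Phi(z_j + I) = \tail_{z_j}(f_j) + (I\cap \widehat{P}),\qquad \Phi'(z_j + I) = \tail_{z_j}(f'_j) + (I\cap \widehat{P}).
\]
Since $\tfrac{1}{c_j}f_j = z_j - \tail_{z_j}(f_j) \in I$ and similarly for $f'_j$, subtracting gives $\tail_{z_j}(f'_j) - \tail_{z_j}(f_j) \in I$. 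Because both tails lie in $\widehat{P}$, this difference belongs to $I \cap \widehat{P}$, so the two images coincide.

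The only subtle point I would pause on is confirming that $\Phi^{-1}$ really is ordering-independent: the formula in Theorem~\ref{thm:goodIso}(b) makes no reference to $\sigma$, but one should note that what is being claimed is that the map $y_i + (I\cap\widehat{P}) \mapsto y_i + I$ genuinely is the two-sided inverse of each $\Phi$ arising from any $Z$-separating ordering. This follows directly from how $\phi$ acts on the $y_j$ in part~(a). Once this is in hand, uniqueness of inverses closes the argument, and the explicit computation on generators is only a sanity check. There is no serious obstacle; the proposition is essentially a corollary of Theorem~\ref{thm:goodIso}.
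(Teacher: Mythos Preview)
Your proof is correct. Your primary argument via Theorem~\ref{thm:goodIso}(b) is a genuinely different route from the paper's: the paper argues directly on generators, checking that $\Phi$ and $\Phi'$ agree on each $x_i\in Y$ trivially and on each $z_j\in Z$ because $\tail_{z_j}(f_j)-\tail_{z_j}(f'_j)=f'_j-f_j\in I\cap\widehat{P}$. In other words, the paper's entire proof is exactly what you relegate to a ``sanity check.'' Your inverse-map argument is cleaner and more conceptual: once one observes that the map $y_i+(I\cap\widehat{P})\mapsto y_i+I$ is induced by the inclusion $\widehat{P}\hookrightarrow P$ and hence involves no Gr\"obner data, uniqueness of inverses finishes immediately. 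The paper's direct computation, on the other hand, has the minor advantage of not requiring the reader to unpack why the formula in Theorem~\ref{thm:goodIso}(b) is truly ordering-free, a point you rightly flag as the only subtlety.
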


\begin{proof}
According to Theorem~\ref{thm:goodIso}.c, we may use the reduced Gr\"obner bases of~$I$ 
with respect to~$\sigma, \sigma'$ to define~$\Phi, \Phi'$
and write them as $G  = \{f_1, \dots, f_s,\, g_1,\dots,g_t\}$ and 
$G' = \{f'_1, \dots, f'_s,\, g'_1,\dots,g'_{t'}\}$,
where $(f_1,\dots,f_s)$ and $(f'_1,\dots,f'_s)$ are coherently $Z$-se\-pa\-ra\-ted
and where $\{g_1,\dots,g_t\}$ and $\{g'_1,\dots,g'_{t'}\}$ are the reduced Gr\"obner bases of
$I\cap\widehat{P}$ with respect to~$\sigma$ and~$\sigma'$.

For every $x_i\in X\setminus Z$, we have $\Phi(x_i+I) = x_i + (I\cap \widehat{P}) = \Phi'(x_i+I)$.
Now we consider $z_i\in Z$. We have $\Phi(z_i+I) = \tail_{z_i}(f_i) + (I\cap \widehat{P})$ 
and $\Phi'(z_i+I) = \tail_{z_i}(f'_i) + (I\cap \widehat{P})$. Since 
$\tail_{z_i}(f_i) - \tail_{z_i}(f'_i) =  f_i - f'_i \in I\cap \widehat{P}$,
we obtain $\Phi(z_i+I)=\Phi'(z_i+I)$, and the proof is complete.
\end{proof}

This proposition motivates the following definition.

\begin{definition}\label{def:GrEmb}
Let $I\subseteq \M$ be an ideal, let $Z = \{z_1,\dots,z_s\}$ be a set of 
distinct indeterminates in~$X$, and let $\epsilon:\; 
\widehat{P} \longrightarrow \widehat{P} / (I\cap \widehat{P})$ be the canonical epimorphism.
\begin{enumerate}
\item[(a)] The $K$-algebra isomorphism $\Phi:\; P/I \longrightarrow \widehat{P} / (I\cap \widehat{P})$
defined in Theorem~\ref{thm:goodIso}.a is called the {\bf $Z$-separating re-em\-bed\-ding} of~$I$.

\item[(b)] The map $\Spec(\Phi^{-1}\circ\epsilon):\; \X \longrightarrow \mathbb{A}^{n-s}_K$
is called the {\bf $Z$-separating re-embedding} of~$\X$.

\end{enumerate}
\end{definition}

By Proposition~\ref{prop:equivZsepGB}, the $Z$-separating re-embedding
of~$I$ (or of~$\X$) does not depend on the choice of a $Z$-separating term ordering~$\sigma$.
This is why we did not mention the choice of~$\sigma$ in this definition.
Clearly, the $Z$-separating re-embedding of~$\X$ is obtained from the given embedding 
$\X \subseteq \mathbb{A}^n_K$ via the projection along the linear subspace defined by~$Z$.
The following example illustrates the theorem.

\begin{example}\label{ex:cohsep}
In the setting of Example~\ref{ex:cohersep}, the reduced $\sigma$-Gr\"obner basis
of $I=\langle f_1,f_2 \rangle$ is $G=\{-f_1,\, g\}$.
Here we have $Z = \{y,z\}$ and $\widehat{P} = \QQ[x]$ as well as $I\cap \widehat{P} = \langle 0\rangle$.
Hence the $Z$-separating re-embedding of~$I$ is the
map $\Phi:\; P/I \longrightarrow \QQ[x]$ defined by $x+I \mapsto x$, by
$y+I \mapsto \tail_y(-f_1)$, and by $z+I \mapsto \tail_z(g)$.
Moreover, the $Z$-separating re-embedding of $\X=\Spec(P/I)$ yields an
isomorphism $\X \cong \AA^1_K$.
\end{example}

\bigskip\bigbreak
%
%

\section{Separating Re-embeddings and the Gr\"obner Fan}
\label{Separating Re-embeddings and the Groebner Fan}

In this section we consider the problem of finding good embeddings of affine schemes into
affine spaces. As before, we let $K$ be a field, let $P=K[x_1,\dots,x_n]$, 
let $\M=\langle x_1,\dots,x_n\rangle$, let~$I$ be an ideal in~$P$ which is contained in~$\M$,
let $R=P/I$, and let $\X=\Spec(R)$. The epimorphism $P \longrightarrow P/I$ corresponds to an embedding
of~$\X$ into the affine space~$\mathbb{A}^n_K$. In this setting we call $n=\dim(P)$
the {\bf ambient dimension} of the embedding of~$\X$ or of the presentation~$P/I$.

Our goal can then be restated by saying that we look for isomorphisms of affine $K$-algebras
$\phi:\; P/I  \cong P'/I'$ such that $P'=K[y_1,\dots,y_m]$, such that~$I'$ is an ideal in~$P'$, 
and such that the ambient dimension $\dim(P')$ is as small as possible.

\begin{example}
In the setting of Example~\ref{ex:separating}, we have $P=\QQ[x,y,z]$ and $\dim(P)=3$. 
We constructed an isomorphism $P/I \cong P'/I'$, where we have $P'=\QQ[y,z]$ and
$I' = \langle y^4z - yz^4 +2z^2 - 2y\rangle$. Since $\dim(P')=2$,
we have re-embedded $\X=\Spec(P/I)$ into an affine space with a lower dimension.
\end{example}

\begin{example}
In the setting of Examples~\ref{ex:cohersep} and~\ref{ex:cohsep}, we have re-embedded
the scheme $\X=\Spec(\QQ[x,y,z] / \langle x^2 -x -y, y^2 -z\rangle)$ from ambient
dimension~3 to ambient dimension~1. Since we actually found that $\X \cong \mathbb{A}^1_K$,
the ambient dimension of the new embedding is as small as possible.
\end{example}

In view of our goal to find presentations of $R=P/I$ with minimal ambient dimensions,
we introduce the following terminology.

\begin{definition}
Let $R=P/I$ be an affine $K$-algebra as above.
\begin{enumerate}
\item[(a)] A $K$-algebra isomophism $\Psi:\; P/I \longrightarrow P'/I'$, where~$P'$
is a polynomial ring over~$K$ and~$I'$ is an ideal in~$P'$, is called 
a {\bf re-embedding} of~$I$. 

\item[(b)] A re-embedding $\Psi:\; P/I \longrightarrow P'/I'$ of~$I$ is called an 
{\bf optimal re-embedding}
if every $K$-algebra isomorphism $P/I \longrightarrow P''/I''$ with a polynomial ring~$P''$
over~$K$ and an ideal~$I''$ in~$P''$ satisfies $\dim(P'')\ge \dim(P')$.

\item[(c)] For an optimal re-embedding $\Psi:\; P/I \longrightarrow P'/I'$ of~$I$, we call
$\edim(R) = \dim(P')$ the {\bf embedding dimension} of the affine $K$-algebra~$R$.

\item[(d)] A $Z$-separating re-embedding $\Phi:\; P/I \longrightarrow
\widehat{P}/(I\cap \widehat{P})$ is called an {\bf optimal separating re-embedding}, 
if for every subset $Z' \subseteq X$ 
such that~$I$ contains a tuple of $Z'$-separating polynomials, we have $\#Z' \le \#Z$.

\item[(e)] For an optimal $Z$-separating re-embedding $\Phi:\; P/I \longrightarrow
\widehat{P} / (I\cap \widehat{P})$, we call $\sepdim(P/I) = \dim(\widehat{P})
= n - \#Z$ the {\bf separating embedding dimension} of the presentation~$P/I$ of~$R$.
\end{enumerate}
\end{definition}

At this point it is clear that,  to find optimal separating re-embedding of~$I$,
we need to find term orderings 
having the maximum number of elements in their reduced Gr\"obner bases of~$I$ 
whose leading terms are indeterminates.
To  find  such reduced Gr\"obner bases we use the notion of
the {\bf Gr\"obner fan} of the ideal~$I$ which was introduced in~\cite{MR}. 
Formally, it is a subdivision of the closed non-negative orthant~$\mathbb R^n_+$ 
consisting of a finite number of polyhedral cones, such that
the cones are in one-to-one correspondence with the leading term ideals of~$I$.
We can also associate to each polyhedral cone a marked reduced Gr\"obner basis
of~$I$ and use these Gr\"obner bases to describe the Gr\"obner fan in the following way.
The notion of marked reduced Gr\"obner basis was introduced in~\cite{RS}.

\begin{definition}\label{linGFan}
Let $I$ be a proper ideal in $P=K[x_1,\dots,x_n]$.
\begin{enumerate}
\item[(a)] Given a term ordering~$\sigma$, a {\bf marked $\sigma$-Gr\"obner basis} of~$I$
is a set of pairs $\overline{G} = \{ (\LT_\sigma(g_1),g_1) ,\dots, (\LT_\sigma(g_r),g_r)\}$, where
$G=\{g_1,\dots,g_r\}$ is a $\sigma$-Gr\"obner basis of~$I$.

\item[(b)] Given a term ordering~$\sigma$ and the marked reduced $\sigma$-Gr\"obner
basis $\overline{G} = \{ (\LT_\sigma(g_1),g_1) ,\dots, (\LT_\sigma(g_r),g_r)\}$ of~$I$, we call
$$
\LI(\overline{G}) = \{ z\in X \mid \LT_\sigma(g_i) = 
z\hbox{\quad\rm for some\quad} i\in\{1,\dots,r\} \}
$$
the {\bf set of leading indeterminates} of~$\overline{G}$.

\item[(c)] The {\bf Gr\"obner fan} of~$I$ is the set of all distinct marked
reduced Gr\"obner bases of~$I$. We denote it by $\GFan(I)$.
\end{enumerate}
\end{definition}

The following example shows that it is important to distinguish between
reduced Gr\"obner bases and marked reduced Gr\"obner bases of~$I$.

\begin{example}
In the ring $P=K[x,y]$, consider the ideal $I=\langle x+y\rangle$.
Then~$I$ has only one reduced Gr\"obner basis, namely $\{x+y\}$,
but two marked reduced Gr\"obner bases, namely
$\{(x,x+y)\}$ and $\{(y,x+y)\}$. Hence the Gr\"obner fan of~$I$ consists of
two distinct elements in this case.
\end{example}

Using the language of Gr\"obner fans, we can  rephrase some of the preceding considerations
as follows.

\begin{proposition}{\bf (Gr\"obner Fans and Optimal Separating Re-embeddings)}\label{prop:GFBest}\\
Let $I\subseteq \M$ be an ideal in~$P$, and let 
$\GFan(I) = \{\overline{G}_1, \dots, \overline{G}_k \}$.
\begin{enumerate}
\item[(a)] For $i=1,\dots,k$, we have $\sepdim(P/I) \le n - \#\LI(\overline{G}_i)$.

\item[(b)] For $i\in \{1,\dots,k\}$, we have $\sepdim(P/I) = n - \#\LI(\overline{G}_i)$
if and only if $\# \LI(\overline{G}_i)$ is maximal.

\item[(c)] For $i\in \{1,\dots,k\}$ such that $\# \LI(\overline{G}_i)$ is maximal,
the $\LI(\overline{G})$-separating re-embedding $\Phi:\; P/I \longrightarrow
\widehat{P} / (I\cap\widehat{P})$ is optimal.
\end{enumerate}
\end{proposition}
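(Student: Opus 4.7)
The plan is to reduce all three parts to a single equivalence: $I$ contains a coherently $Z$-separating tuple if and only if there is a marked reduced Gr\"obner basis $\overline{G}_i\in\GFan(I)$ with $Z\subseteq \LI(\overline{G}_i)$. Once this equivalence is in hand, the three statements are bookkeeping against the definitions of $\sepdim$ and of optimality.

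First I would establish the equivalence. For the forward direction, given a coherently $Z$-separating tuple $(f_1,\dots,f_s)$ in $I$, Remark~\ref{rem:elimTO} supplies an elimination ordering $\sigma$ for $Z$ with $\LT_\sigma(f_i)=z_i$, and Proposition~\ref{prop:partofmin}.(a)$\Rightarrow$(c) produces a $\sigma$-Gr\"obner basis of $I$ of the special $Z$-separating shape. Proposition~\ref{prop:charSepGB}.a then upgrades this to the reduced $\sigma$-Gr\"obner basis of $I$, whose set of leading indeterminates therefore contains $Z$. For the reverse direction, if $Z\subseteq \LI(\overline{G}_i)$ where $\overline{G}_i$ is the marked reduced $\sigma$-Gr\"obner basis of $I$, I select the subtuple of $\overline{G}_i$ whose marked leading terms are exactly the elements of $Z$; full interreducedness of the reduced Gr\"obner basis gives both conditions of Definition~\ref{def:consistsep} at once.

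For part~(a), fix any $i$ and set $Z_i = \LI(\overline{G}_i)$. The equivalence produces a coherently $Z_i$-separating tuple in $I$, and the definition of an optimal separating re-embedding forces $\#Z_i\le n-\sepdim(P/I)$, i.e., $\sepdim(P/I)\le n-\#\LI(\overline{G}_i)$. For parts~(b) and~(c), pick $Z^\ast\subseteq X$ of maximum cardinality such that $I$ contains a coherently $Z^\ast$-separating tuple, so that $\sepdim(P/I)=n-\#Z^\ast$ by definition. The equivalence yields some $\overline{G}_j$ with $Z^\ast\subseteq \LI(\overline{G}_j)$, hence $\#\LI(\overline{G}_j)\ge \#Z^\ast$, and combined with~(a) we conclude $\#\LI(\overline{G}_j)=\#Z^\ast=\max_i\#\LI(\overline{G}_i)$. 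Both implications of~(b) follow immediately, and~(c) follows because whenever $\#\LI(\overline{G}_i)$ is maximal, the set $Z=\LI(\overline{G}_i)$ realizes the supremum in the definition of an optimal separating re-embedding.

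The main obstacle is the reverse half of the key equivalence: one must verify that the \emph{subtuple} of a reduced Gr\"obner basis selected by the condition $\LT_\sigma(g)\in Z$ is itself coherently $Z$-separating. This is not literally contained in Proposition~\ref{prop:partofmin}, which starts from a tuple and builds a Gr\"obner basis rather than the other way around, but it falls out of interreducedness: no term of any basis element outside its own marked leading monomial is divisible by any $z\in Z\subseteq\LI(\overline{G}_i)$, which is exactly what Definition~\ref{def:consistsep}~(1) and~(2) demand.
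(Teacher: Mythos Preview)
Your proof is correct. The paper's own proof is the single sentence ``This follows immediately from Theorem~\ref{thm:goodIso},'' so your argument is not so much a different route as a complete unpacking of what the paper leaves implicit: the equivalence between ``$I$ contains a coherently $Z$-separating tuple'' and ``$Z\subseteq\LI(\overline{G}_i)$ for some $\overline{G}_i\in\GFan(I)$'' is indeed the real content, and your justification of the reverse direction via interreducedness of the reduced Gr\"obner basis is exactly right and is the point the paper does not spell out.
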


\begin{proof}
This follows immediately from Theorem~\ref{thm:goodIso}.
\end{proof}

The following example shows that
an optimal separating re-embedding need not be an optimal re-embedding of~$I$.

\begin{example}\label{ex:bestemb}
Let $P=\QQ[x,y,z]$, and let $I = \langle f_1,f_2\rangle$,  where
$$
\begin{array}{lcl}
f_1 &=& x^2  -2xy  -y^2  +x  -2z \cr
f_2 &=& x^3y \! + \tfrac{7}{6}x^2y^2 \! + \tfrac{2}{3}xy^3 \! +\tfrac{1}{6}y^4\! -\tfrac{1}{6}x^3 
\!+\tfrac{1}{3}x^2y \!+\tfrac{2}{3}xy^2\! +\tfrac{1}{3}y^3 
\!+\tfrac{1}{3}x^2z \! -\tfrac{1}{3}xy \cr
&& \!-\tfrac{1}{6}y\! -\tfrac{1}{6}z
\end{array}
$$
Then a computation using \cocoa\ shows that
$\GFan(I)$ consists of 26 different marked reduced Gr\"obner bases. 
Only two of them define optimal separating re-embeddings $\Phi:\; P/I \longrightarrow
\widehat{P}/ (I\cap \widehat{P})$.

One of these two bases is $\overline{G} =  \{ (z,g_1), (x^4,g_2)\}$ where
$$
\begin{array}{lcl}
g_1 &= & z -\tfrac{1}{2}x^2 +xy +\tfrac{1}{2}y^2 -\tfrac{1}{2}x, \cr
g_2 &= & x^4 \!+4x^3y \!+6x^2y^2 \!+4xy^3\! +y^4 \!+2x^2y \!+4xy^2 \!+2y^3 
\!-\tfrac{1}{2}x^2\! -xy \!+\tfrac{1}{2}y^2 \cr
&& \!-\tfrac{1}{2}x \!-y
\end{array}
$$
Thus we get an optimal separating re-embedding $ \Phi:\; P/I \;\longrightarrow\; \widehat{P} / 
(I\cap \widehat{P})$, where $\widehat{P}/(I\cap\widehat{P})$ has ambient dimension~2.

Next we consider the $\QQ$-algebra automorphism $\Psi:\; P \longrightarrow P$ given by 
$\Psi(x) =x -y +x^2$, $\Psi(y) = y -x^2$, and $\Psi(z) =  z +x^4 +2x^3 -2x^2y +x^2 -2xy +y^2$.
Notice that its inverse $\Psi^{-1}:\; P \longrightarrow P$ is given by 
$\Psi^{-1}(x) = x+y$, $\Psi^{-1}(y) = y +(x+y)^2$, and $\Psi^{-1}(z) = z - x^2$.
For $h_1=\Psi(g_1)$ and $h_2 = \Psi(g_2)$, we have
$$
\begin{array}{lcl}
h_1 &= & x -y -2z\cr
h_2 &= & -\tfrac{1}{6}x^5 +\tfrac{1}{6}x^4y +\tfrac{1}{3}x^4z -\tfrac{1}{3}x^4 
+\tfrac{2}{3}x^3y -\tfrac{1}{3}x^2y^2 +\tfrac{2}{3}x^3z 
-\tfrac{2}{3}x^2yz -\tfrac{1}{6}x^3 \cr
& & +\tfrac{1}{2}x^2y -\tfrac{1}{2}xy^2 +\tfrac{1}{6}y^3
+\tfrac{1}{3}x^2z -\tfrac{2}{3}xyz +\tfrac{1}{3}y^2z +\tfrac{1}{6}y^2 +\tfrac{1}{6}y -\tfrac{1}{6}z
\end{array}
$$
Consequently, the map~$\Psi$ induces a $K$-algebra isomorphism $\psi:\; P/I \longrightarrow P/I'$,
where $I'=\langle h_1,h_2\rangle$.

Next we calculate that $\GFan(I')$ comprises five marked reduced Gr\"obner bases. 
The optimal separating re-embedding $\Phi':\; P/I' \longrightarrow \widehat{P}'/ (I'\cap \widehat{P}')$
of~$I'$ is obtained using the Gr\"obner basis $\{ z -y^2 +y,\;  x -2y^2 +y\}$. 
It defines a separating re-embedding $\Phi':\; P/I' \longrightarrow \QQ[y]$.

In summary, we have a $\QQ$-algebra isomorphism $\Phi' \circ \psi:\; P/I \longrightarrow \QQ[y]$ 
and conclude that $\edim(P/I)=1$, whereas $\sepdim(P/I)= 3 - \#\LI(\overline{G}) = 2$.
\end{example}

Several marked reduced Gr\"obner bases can correspond to the same separating re-embedding,
as Proposition~\ref{prop:equivZsepGB} indicates.
Hence it is natural to define the following equivalence relation.

\begin{definition}
Let $I\subseteq\M$ be an ideal in~$P$, 
and let $\GFan(I)=\{ \overline{G}_1, \dots, \overline{G}_k\}$.
Then the equivalence relation $\sim_{\LI}$ on $\GFan(I)$ defined by
$$
\overline{G}_i \sim_{\LI} \overline{G}_j \quad\Longleftrightarrow\quad 
\LI(\overline{G}_i) = \LI(\overline{G}_j)
$$
is called the {\bf leading indeterminate set equivalence} on $\GFan(I)$.
\end{definition}

The leading indeterminate set equivalence allows us to classify separating re-embeddings
and optimal separating re-embeddings as follows.

\begin{proposition}\label{prop:equivalence}
Let $I\subseteq\M$ be an ideal in~$P$.
\begin{enumerate}
\item[(a)] There is a 1-1 correspondence between $Z$-separating re-embeddings \\
$\Phi:\; P/I \longrightarrow \widehat{P} / (I\cap \widehat{P})$ of~$I$ and the set of
equivalence classes $\GFan(I) / \!\sim_{\LI}$.

\item[(b)] The equivalence relation $\sim_{\LI}$ induces an equivalence relation
on the set $\mathcal{G}$ of all $\overline{G}\in \GFan(I)$ such that 
$\sepdim(P/I) = n - \# \LI(\overline{G})$.

\item[(c)] There is a 1-1 correspondence between optimal separating re-embeddings
$\Phi:\; P/I \longrightarrow \widehat{P} / (I\cap \widehat{P})$ of~$I$ and the set of
equivalence classes $\mathcal{G} / \sim_{\LI}$.
\end{enumerate}
\end{proposition}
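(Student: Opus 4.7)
The plan is to establish part~(a) by constructing an explicit bijection between the equivalence classes and the set of $Z$-separating re-embeddings, then to deduce parts~(b) and~(c) from it together with Proposition~\ref{prop:GFBest}.

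For part~(a), I would define a map
\[
\Theta:\; \GFan(I)/\!\sim_{\LI} \;\longrightarrow\; \{Z\text{-separating re-embeddings of }I\}
\]
as follows. Given a marked reduced Gr\"obner basis $\overline{G}\in\GFan(I)$ with associated term ordering~$\sigma$, set $Z = \LI(\overline{G})$ and let $\Theta([\overline{G}])$ be the $Z$-separating re-embedding of~$I$ produced by Theorem~\ref{thm:goodIso}. The first thing to check is that this is meaningful: since each $z\in Z$ appears as a leading term $\LT_\sigma(g_i)$ of an element of~$\overline{G}$, the term ordering~$\sigma$ is $Z$-separating for~$I$, and by Proposition~\ref{prop:charSepGB}.a the reduced $\sigma$-Gr\"obner basis of~$I$ is already a $Z$-separating $\sigma$-Gr\"obner basis. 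Thus Theorem~\ref{thm:goodIso} applies and yields a $Z$-separating re-embedding. Well-definedness on equivalence classes is then immediate from Proposition~\ref{prop:equivZsepGB}: two marked reduced Gr\"obner bases with the same set of leading indeterminates produce the same re-embedding.

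For injectivity of~$\Theta$, I would observe that a $Z$-separating re-embedding is in particular a $K$-algebra map $\Phi: P/I \to \widehat{P}/(I\cap\widehat{P})$, and its codomain is $K[X\setminus Z]/(I\cap K[X\setminus Z])$. The distinguished subring $\widehat{P}\subseteq P$ (and hence the set~$Z$) is recovered from the codomain, so if $\Theta([\overline{G}_1]) = \Theta([\overline{G}_2])$ then $\LI(\overline{G}_1) = \LI(\overline{G}_2)$, giving $\overline{G}_1 \sim_{\LI}\overline{G}_2$. For surjectivity, any $Z$-separating re-embedding of~$I$ arises from some $Z$-separating term ordering~$\sigma$ by Definition~\ref{def:sepGB} and Theorem~\ref{thm:goodIso}; by Proposition~\ref{prop:charSepGB}.a the reduced $\sigma$-Gr\"obner basis~$\overline{G}$ of~$I$ lies in $\GFan(I)$ with $\LI(\overline{G}) = Z$, and Proposition~\ref{prop:equivZsepGB} identifies the given re-embedding with $\Theta([\overline{G}])$.

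Part~(b) is purely set-theoretic: the function $\overline{G}\mapsto \#\LI(\overline{G})$ is constant on $\sim_{\LI}$-classes, so the subset $\mathcal{G}\subseteq\GFan(I)$ characterized by $\#\LI(\overline{G})$ being maximal (equivalently, $\sepdim(P/I) = n - \#\LI(\overline{G})$ by Proposition~\ref{prop:GFBest}.b) is a union of equivalence classes, and $\sim_{\LI}$ restricts to it. For part~(c), I would invoke Proposition~\ref{prop:GFBest}.b--c to identify the optimal separating re-embeddings with those $\Theta([\overline{G}])$ for which $\overline{G}\in \mathcal{G}$, and then restrict the bijection of part~(a) to $\mathcal{G}/\!\sim_{\LI}$.

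The only step that requires care is checking that the codomain of a $Z$-separating re-embedding determines~$Z$; once this is granted, the rest is a bookkeeping exercise invoking the preceding Propositions~\ref{prop:equivZsepGB}, \ref{prop:charSepGB}, and~\ref{prop:GFBest}. No additional computation or new idea appears necessary.
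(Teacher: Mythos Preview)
Your argument follows essentially the same route as the paper's, just written out in more detail: the paper invokes Proposition~\ref{prop:equivZsepGB} for well-definedness, observes that the codomain $\widehat{P}=K[X\setminus Z]$ recovers~$Z$ for injectivity, and declares that~(b) and~(c) follow immediately from~(a). Your explicit construction of~$\Theta$ and appeals to Propositions~\ref{prop:charSepGB} and~\ref{prop:GFBest} merely make these same steps visible.
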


\begin{proof}
To prove~(a), we note that Proposition~\ref{prop:equivZsepGB} says that equivalent marked reduced
Gr\"obner bases of~$I$ yield identical maps~$\Phi$. Conversely, if we are given
two such re-embeddings to the same ring $\widehat{P}/ (I\cap \widehat{P})$, where $\widehat{P}=K[Y]$, 
then they correspond to the same set of indeterminates $Z=X\setminus Y$ and hence to equivalent
marked reduced Gr\"obner bases.
Claims~(b) and~(c) follow immediately from~(a).
\end{proof}

The next example illustrates this proposition.

\begin{example}\label{ex:partition}
Let $P=\QQ[x,y,z]$, and let 
$I = \langle f_1, f_2, f_3, f_4, f_5, f_6\rangle$,
where we have $f_1 = x^2 -y$, $f_2 = xy -x -z$,  $f_3 = y^2 +z^2 +2x +y +2z$, $f_4 = xz +z^2 +2x +2y +2z$,
$f_5 = yz +z^2 +3x +3y +3z$, and $f_6 = z^3 +z^2 -5x -5y -5z$.

Using \cocoa, we may check that $\GFan(I)$ consists of 13 marked reduced Gr\"obner bases
and is the disjoint union of four equivalence classes modulo~$\sim_{\LI}$.
These equivalence classes are represented by the following marked reduced Gr\"obner bases: 
\begin{align*}
\overline{G}_1 \;=\; \{ &(x,\, x +\tfrac{1}{3}yz +\tfrac{1}{3}z^2 +y +z),\;
(y^2,\, y^2 -\tfrac{2}{3}yz +\tfrac{1}{3}z^2 -y),\cr
&(yz^2,\, yz^2 +\tfrac{10}{3}yz -\tfrac{2}{3}z^2),\;  
(z^3,\, z^3{+}\tfrac{5}{3}yz {+}\tfrac{8}{3}z^2) \} \cr
\overline{G}_2 \;=\; \{ &(z,\, z -xy +x),\;  (x^2,\, x^2 -y),\;  
(y^3,\, y^3 +2xy -y^2 +2y),\cr  
&(xy^2,\,xy^2 -y^2 +2y) \} \cr
\overline{G}_3 \;=\; \{ & (y,\, y +\tfrac{1}{2}xz +\tfrac{1}{2}z^2 +x +z),\;  
(x^2,\, x^2 +\tfrac{1}{2}xz +\tfrac{1}{2}z^2 +x +z),\cr
&(xz^2,\, xz^2 + \tfrac{5}{2}xz -\tfrac{1}{2}z^2),\; 
(z^3,\, z^3 +\tfrac{5}{2}xz +\tfrac{7}{2}z^2) \} \cr
\overline{G}_4 \;=\; \{ & (y,\, y -x^2),\; (z,\, z -x^3 +x),\; 
(x^5,\, x^5 -x^4 +2x^2) \} 
\end{align*}
Here the equivalence class of~$\overline{G}_4$ contains only one marked
reduced Gr\"obner basis, namely~$\overline{G}_4$.
It is the only equivalence class which defines an optimal separating re-embedding of~$I$. 
Here we have $Z = \{y,z\}$ and 
the resulting separating re-embedding is the map $\Phi:\; P/I \longrightarrow \QQ[x] / I'$
given by $x+I \mapsto x+I'$, $y+I \mapsto x^2+I'$ and $z+I \mapsto x^3-x + I'$, where
$I'= \langle x^5 -x^4 +2x^2\rangle$. In particular, we see that $\#\LI(\overline{G}_4)=2$,
and hence $\sepdim(P/I)=1$.

Notice that in the current example we even have $\edim(P/I)=1$, since the ring $P/I \cong \QQ[x]/I'$ 
has a singularity at the origin (see also Example~\ref{ex:partitioncont}).
\end{example}

\bigskip\bigbreak
%
%

\section{Cotangent Spaces and Optimal Re-embeddings}
\label{Cotangent Spaces and Optimal Re-embeddings}

In this section we investigate criteria for checking whether
an optimal separating re-embedding yields already an optimal re-embedding.
As we saw in Example~\ref{ex:bestemb}, this is not always the case.
However, we can use the size of the cotangent space at a linear maximal ideal
to bound the embedding dimension and the separating embedding dimension, as the
next theorem shows.

In the following we continue to use the setting introduced above. In particular,
let $K$ be a field, let $P=K[x_1,\dots,x_n]$, let~$I$ be an ideal in~$P$
contained in $\M=\langle x_1,\dots,x_n\rangle$, let $R=P/I$, and let $\m=\M/I$.

\begin{theorem}\label{thm:linandtang}
In the above setting, let~$\overline{G}$ be a marked reduced Gr\"obner basis of~$I$.

\begin{enumerate}
\item[(a)]  We have $\#\LI(\overline{G}) \le \dim_K(\Lin_\M(I))$.

\item[(b)] We have  
$\dim_K(\Cot_\m(P/I)) \le   \edim(P/I) \le \sepdim(P/I) \le n - \#\LI(\overline{G})$.
\end{enumerate} 
\end{theorem}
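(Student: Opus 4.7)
The plan is to prove parts (a) and (b) separately, with (a) a direct consequence of reducedness and (b) a chain of three inequalities of varying difficulty.

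For part (a), I would enumerate $\LI(\overline{G}) = \{z_1, \dots, z_s\}$ and pick the basis elements $g_1, \dots, g_s \in \overline{G}$ with $\LT_\sigma(g_i) = z_i$ and $\LC_\sigma(g_i) = 1$. Since $\overline{G}$ is a reduced Gr\"obner basis, no term of $g_i - z_i$ is divisible by any $\LT_\sigma(g_j)$; in particular, none is divisible by any $z_j$. Hence $g_i - z_i \in K[Y]$, where $Y := X \setminus \{z_1, \dots, z_s\}$. Consequently $\Lin_\M(g_i) = z_i - \ell_i$ with $\ell_i$ a $K$-linear combination of the elements of $Y$, and the expansion of $\Lin_\M(g_1), \dots, \Lin_\M(g_s)$ in the ordered basis $(z_1, \dots, z_s, y_1, \dots, y_{n-s})$ of $\M_1$ displays an $s \times s$ identity block. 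So these are $s$ linearly independent elements of $\Lin_\M(I)$, which proves (a).

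For part (b), the middle inequality $\edim(P/I) \le \sepdim(P/I)$ is immediate, since every optimal separating re-embedding is in particular a re-embedding. The rightmost inequality $\sepdim(P/I) \le n - \#\LI(\overline{G})$ is Proposition~\ref{prop:GFBest}(a), applied to the marked reduced Gr\"obner basis~$\overline{G}$. For the leftmost inequality $\dim_K(\Cot_\m(P/I)) \le \edim(P/I)$, I would choose an optimal re-embedding $\Psi\colon P/I \to P'/I'$ with $\dim P' = m = \edim(P/I)$. Since $\Psi$ is a $K$-algebra isomorphism and $(P/I)/\m \cong K$, the ideal $\Psi(\m)$ is a $K$-rational maximal ideal of $P'/I'$, hence of the form $\M'/I'$ for a linear maximal ideal $\M' \subseteq P'$. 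The induced isomorphisms $\m/\m^2 \cong \Psi(\m)/\Psi(\m)^2 \cong \M'/(I' + \M'^2)$ then give $\dim_K \Cot_\m(P/I) \le \dim_K \M'/\M'^2 = m$.

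The only conceptually nontrivial step is the leftmost inequality of (b), which relies on two standard facts: $K$-algebra isomorphisms preserve residue fields at maximal ideals, and the cotangent space $\m/\m^2$ is an invariant of the local $K$-algebra structure. Everything else is a direct application of reduced Gr\"obner basis properties and the results already established in Section~3, so the full proof should be short.
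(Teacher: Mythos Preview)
Your proof is correct and follows essentially the same approach as the paper. For part~(a) you extract the $z_i$-coefficient block to show linear independence of the $\Lin_\M(g_i)$, which is exactly the paper's argument (phrased there as ``the non-zero $\M$-linear parts have the elements of~$Z$ as their leading terms, hence are $K$-linearly independent''); your version is actually cleaner, since the paper's opening sentence unnecessarily restricts to a $Z$-separating basis with $\#Z$ maximal. For part~(b) both you and the paper handle the middle and rightmost inequalities by invoking the definitions and Proposition~\ref{prop:GFBest}, and both prove the leftmost inequality by passing to an optimal presentation and using that $\m/\m^2$ is an invariant of~$R$; your explicit verification that $\Psi(\m)$ corresponds to a linear maximal ideal of~$P'$ is a welcome detail the paper leaves implicit.
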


\begin{proof} 
To prove~(a), we let $\overline{G}$ be a $Z$-separating Gr\"obner basis
of~$I$ such that $\# \LI(\overline{G}) = \# Z$ is maximal.
It follows that the non-zero $\M$-linear parts $\Lin_\M(g)$ of the elements 
$g\in \overline{G}$ have the elements of~$Z$ as their leading terms. 
Hence they are $K$-linearly independent,
and Proposition~\ref{prop:linearpart} implies that they are contained in a $K$-basis of~$\Lin_\M(I)$.
This yields the claim.

To show~(b), we note that the second and third inequalities follow from the definitions
and Theorem~\ref{thm:goodIso}.
To prove the first inequality, we note that both $\edim(R)$ and the cotangent space $\Cot_\m(R) = 
\m/\m^2$ are invariants of the ring~$R=P/I$ and do not depend on the particular presentation.
Hence we may assume that $R=P/I$ is a presentation such that $\dim(P)=\edim(R)$.
Then Proposition~\ref{prop:cotancomp} yields
$\dim_K(\Cot_\m(P/I)) = n - \dim_K(\Lin_\M(I)) \le \dim(P) = \edim(R)$, as was to be shown.
\end{proof}

A particularly powerful way to use this theorem is given by the following corollary.

\begin{corollary}\label{cor:checkopt}
In the above setting, assume that $Z$ is a set of~$s$ distinct indeterminates in~$X$
and that there exists a coherently $Z$-separating tuple $(f_1,\dots,f_s)$ of
polynomials in~$I$. If $s=\dim_K(\Lin_\M(I))$ then we have $\edim(P/I)=n-s$ and
the $Z$-separating re-embedding $\Phi:\; P/I \longrightarrow \widehat{P} / (I\cap 
\widehat{P})$ is an optimal re-embedding.
\end{corollary}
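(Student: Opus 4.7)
The plan is to stack the inequalities already assembled in Theorem~\ref{thm:linandtang}(b) and to show that the hypothesis $s=\dim_K(\Lin_\M(I))$ pinches them into equalities.

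First I would invoke Theorem~\ref{thm:goodIso}: the coherently $Z$-separating tuple $(f_1,\dots,f_s)$ in~$I$ produces the $Z$-separating re-embedding $\Phi:\;P/I\longrightarrow\widehat{P}/(I\cap\widehat{P})$, where $\widehat{P}=K[X\setminus Z]$ has dimension $n-s$. By Definition of the separating embedding dimension, the optimal separating re-embedding maximizes $\#Z$, hence $\sepdim(P/I)\le n-s$.

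Next I would compute the cotangent dimension. Proposition~\ref{prop:cotancomp}(b) gives $\Cot_\m(P/I)\cong \M_1/\Lin_\M(I)$; since $\dim_K(\M_1)=n$, this yields
$$
\dim_K(\Cot_\m(P/I)) \;=\; n-\dim_K(\Lin_\M(I)) \;=\; n-s,
$$
where the second equality uses the hypothesis. Feeding both computations into the chain from Theorem~\ref{thm:linandtang}(b) gives
$$
n-s \;=\; \dim_K(\Cot_\m(P/I)) \;\le\; \edim(P/I) \;\le\; \sepdim(P/I) \;\le\; n-s,
$$
and so all four terms equal $n-s$. In particular $\edim(P/I)=n-s$.

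Finally, since $\Phi$ presents $P/I$ as $\widehat{P}/(I\cap\widehat{P})$ with $\dim(\widehat{P})=n-s=\edim(P/I)$, no $K$-algebra isomorphism $P/I\cong P''/I''$ can have $\dim(P'')<\dim(\widehat{P})$, hence $\Phi$ is optimal by Definition of an optimal re-embedding. There is essentially no hard step here; all the serious work is carried out in Theorem~\ref{thm:goodIso}, Proposition~\ref{prop:cotancomp}, and Theorem~\ref{thm:linandtang}, and the only point that needs care is reading off $\sepdim(P/I)\le n-s$ directly from the existence of a single coherently $Z$-separating tuple of size~$s$ rather than from a maximality argument.
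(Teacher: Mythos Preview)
Your argument is correct and mirrors the paper's own proof: both compute $\dim_K(\Cot_\m(P/I))=n-s$ via Proposition~\ref{prop:cotancomp} and the hypothesis, then squeeze the chain of inequalities in Theorem~\ref{thm:linandtang}(b) into equalities. The only cosmetic difference is that the paper phrases the upper bound as $n-\#\LI(\overline{G})$ for a suitable marked reduced Gr\"obner basis~$\overline{G}$, whereas you read off $\sepdim(P/I)\le n-s$ directly from the definition; these are the same observation.
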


\begin{proof}
The hypothesis implies $\dim_K(\Cot_\m(P/I) = n - \dim_K(\Lin_\M(I)) = n-s =
n - \#\LI(\overline{G})$. Hence we have equalities in part~(b) of the theorem, and
thus $\edim(P/I) = n-s$.
\end{proof}

The following examples provide some explicit instances for the application of this corollary.

\begin{example}\label{ex:bestemb2}
Let $P=\QQ[x,y,z,w,t]$, let $I = \langle f_1, f_2, f_3, f_4 \rangle$,  where
$f_1= x^2 +x-z +t$, $f_2 = z^2 -w^2  -w$, $f_3 = w^2 -y +w -t$, and $f_4 = x^2 +w^2 -w$,
and let $\m = \langle x, y, z, w, t\rangle /I$.

Using \cocoa, we may check that the Gr\"obner fan of~$I$ comprises 462 marked reduced Gr\"obner bases. 
The number of leading indeterminate set equivalence classes is 7, and the number of 
equivalence classes which correspond to  optimal separating re-embeddings of~$I$ is two.   
A marked reduced Gr\"obner basis which represents one of these two equivalence classes is
\begin{align*}
\overline{G} \;=\;  \{ &(t,\, t +x^2 +x -z),\;  (y,\, y -x^2 -z^2 -x +z),\;  
(w,\, w -\tfrac{1}{2}x^2 -\tfrac{1}{2}z^2),\cr  &(x^4,\, x^4 +2x^2z^2 +z^4 +2x^2 -2z^2)  \}
\end{align*}
Here we have $\#\LI(\overline{G})=3$ and $\dim(P)-\#\LI(\overline{G})=2$.

On the other hand, it is easy to check that $\Lin_\M(I)= \langle x-z+t,\, y+t,\, w\rangle_K$.
Therefore the corollary shows that $\edim(R)=2$ and
the $Z$-separating re-embedding $\Phi:\; P/I \longrightarrow \QQ[x,z] / 
\langle x^4 +2x^2z^2 +z^4 +2x^2 -2z^2\rangle$ is an optimal re-embedding.
\end{example}

A similar argument works in the setting of Example~\ref{ex:partition}.

\begin{example}\label{ex:partitioncont}
As in Example~\ref{ex:partition}, let $P=\QQ[x,y,z]$, and let 
$I = \langle f_1, \dots, f_6\rangle$,
where we have $f_1 = x^2 -y$, $f_2 = xy -x -z$,  $f_3 = y^2 +z^2 +2x +y +2z$, $f_4 = xz +z^2 +2x +2y +2z$,
$f_5 = yz +z^2 +3x +3y +3z$, and $f_6 = z^3 +z^2 -5x -5y -5z$.
We have already seen that, for $Z=\{y,z\}$, there is a $Z$-separating re-embedding 
$\Phi:\; P/I \longrightarrow \QQ[x] / \langle x^5 -x^4 +2x^2\rangle$.

Since $\Lin_\M(I) = \langle x+z,\, y\rangle_K$, we get $\dim_{\QQ}(\Lin_\M(I)) = 2$.
Using the corollary, we conclude that $\edim(P/I)=1$ and that $\Phi$ is an optimal 
re-embedding. 
\end{example}

Let us also note that any $Z$-separating re-embedding $\Phi:\; P/I \longrightarrow 
\widehat{P} / (I\cap \widehat{P})$ such that $I\cap \widehat{P}= \langle 0\rangle$
is an optimal re-embedding, since the cotangent space of the zero ideal in~$\widehat{P}$
has dimension $\dim(\widehat{P})$. The next example shows that, in general, 
the problem of detecting an optimal embedding can be difficult to deal with.

\begin{example}\label{ex:Crachiola}
Let $P=\QQ[x,y,z,t]$, and let $I=\langle f\rangle$, where $f = x +x^2y +z^2 + t^3$.  
Clearly, the only reduced Gr\"obner basis of~$I$ is $\{f\}$.
As~$f$ is not separating for any indeterminate, the optimal separating re-embedding
of~$P/I$ is the identity map.

But is it an optimal re-embedding, i.e., is $\edim(P/I)=4$? 
The answer is yes, but to prove this is not an easy task, as shown in~\cite{Cr}.
\end{example}

The final example contains an application of Corollary~\ref{cor:checkopt}
to get an optimal re-embedding of a singular border basis scheme.
In this example we use a bit of theory of border bases schemes
(see for instance~\cite{KR2}, \cite{KR4}, \cite{KR5}, \cite{KSL}, and \cite{Rob}).
As said in the introduction, we will devote another paper to an extensive study of 
good, possibly optimal, embeddings of border bases schemes.

\begin{example}\label{ex:singBBS}
Let $P=\QQ[x,y,z]$, and let $\OO= \{1, z, y, x\}$. 
Then the border of~$\OO$ is given by
$\partial\OO=\{ z^2,\, yz,\, xz,\, y^2,\,  xy,\,  x^2\}$. 
This yields $\mu=4$ and $\nu=6$. Consequently, the polynomial ring $\QQ[C]$
has 24 indeterminates.

The computation of the linear part of~$I(\BO)$ using Proposition~\ref{prop:cotancomp}
shows that we have $\Lin_\M(I(\BO)) = \langle c_{11},\dots,c_{16}\rangle_{\QQ}$, and therefore 
$n\,\mu = 12$ implies that the monomial point $p=(0,\dots,0)$ is a singular point of~$\BO$
(see also~\cite{KSL}, Example 4.3).
In particular, it follows that~$\BO$ is not isomorphic to an affine space.
Nevertheless, let us try to embed~$\BO$ optimally. There exists a set of coherently $Z$-separating
polynomials $\{f_1,\dots,f_6\}$ in~$I(\BO)$, where $Z=\{c_{11},\dots,c_{16}\}$ and
\begin{align*}
f_1 \;=\; & c_{11} -c_{23}c_{41} -c_{33}c_{42}+c_{21}c_{43} -c_{43}^2 +c_{31}c_{45} +c_{41}c_{46},\cr
f_2 \;=\; & c_{12} -c_{23}c_{42} +c_{22}c_{43} -c_{33}c_{44} +c_{32}c_{45} -c_{43}c_{45} +c_{42}c_{46},\cr
f_3 \;=\; & c_{13} -c_{26}c_{41} -c_{36}c_{42} +c_{23}c_{43} +c_{33}c_{45},\cr
f_4 \;=\; & c_{14} -c_{25}c_{42} +c_{24}c_{43} -c_{35}c_{44} +c_{34}c_{45} -c_{45}^2 +c_{44}c_{46},\cr
f_5 \;=\; & c_{15} -c_{26}c_{42} +c_{25}c_{43} -c_{36}c_{44} +c_{35}c_{45},\cr
f_6 \;=\; & c_{16} +c_{26}c_{32} -c_{25}c_{33} -c_{35}^2 +c_{34}c_{36} -c_{36}c_{45} +c_{35}c_{46}
\end{align*}
Consequently, the $Z$-separating re-embedding of the ideal~$I(\BO)$
is an isomorphism $\Phi:\, B_\OO \rightarrow \widehat{P} / (I(\BO)\cap \widehat{P})$,
where $\widehat{P} = \QQ[c_{21},\dots,c_{26},c_{31},\dots,c_{36},c_{41},\dots,c_{46}]$
is a polynomial ring in~18 indeterminates. Here it turns out
that $I(\BO) \cap \widehat{P}$ is minimally generated by the 15 quadratic polynomials
$$
\scriptstyle{
\begin{array}{lll}
 h_1 &\!=\!&  c_{25}c_{32} {-}c_{24}c_{33} {+}c_{26}c_{42} {-}c_{25}c_{43} \cr
 h_2 &\!=\!&   c_{25}c_{41} {-}c_{23}c_{42} {+}c_{35}c_{42} {-}c_{33}c_{44} \cr
 h_3 &\!=\!& c_{25}c_{31} {-}c_{22}c_{33} {-}c_{36}c_{42} {+}c_{33}c_{45} \cr
 h_4 &\!=\!&  c_{26}c_{31} {-}c_{23}c_{33} {-}c_{33}c_{35} {+}c_{32}c_{36} {-}c_{36}c_{43} {+}c_{33}c_{46} \cr
 h_5 &\!=\!&    c_{22}c_{41} {-}c_{21}c_{42} {+}c_{32}c_{42} {+}c_{42}c_{43} {-}c_{31}c_{44} {-}c_{41}c_{45} \cr
 h_6 &\!=\!&  c_{23}c_{24} {-}c_{22}c_{25} {+}c_{25}c_{34} {-}c_{24}c_{35} {+}c_{26}c_{44} {-}c_{25}c_{45} \cr
 h_7 &\!=\!&    c_{23}c_{31} {-}c_{21}c_{33} {+}c_{32}c_{33} {-}c_{31}c_{35} {-}c_{36}c_{41} {+}c_{33}c_{43} \cr
 h_8 &\!=\!&     c_{24}c_{41} {-}c_{22}c_{42} {+}c_{34}c_{42} {-}c_{32}c_{44} {+}c_{43}c_{44} {-}c_{42}c_{45} \cr
 h_9 &\!=\!&    c_{23}c_{25} {-}c_{22}c_{26} {+}c_{25}c_{35} {-}c_{24}c_{36} {+}c_{26}c_{45} {-}c_{25}c_{46} \cr
 h_{10} &\!=\!&   c_{23}c_{32} {-}c_{22}c_{33} {+}c_{33}c_{34} {-}c_{32}c_{35} {+}c_{26}c_{41} {-}c_{23}c_{43}
                 {+}c_{35}c_{43} {-}c_{33}c_{45} \cr
 h_{11} &\!=\!&   c_{24}c_{31} {-}c_{22}c_{32} {-}c_{23}c_{42} {-}c_{35}c_{42} {+}c_{22}c_{43} {+}c_{32}c_{45}
                  {-}c_{43}c_{45} {+}c_{42}c_{46} \cr
 h_{12} &\!=\!&   c_{22}c_{23} {-}c_{21}c_{25} {+}c_{24}c_{33} {-}c_{22}c_{35} {-}c_{26}c_{42} {+}2c_{25}c_{43}
                  {-}c_{36}c_{44} {-}c_{23}c_{45}  {+}c_{35}c_{45} \cr
 h_{13} &\!=\!&   c_{22}^2 {-}c_{21}c_{24} {+}c_{24}c_{32} {-}c_{22}c_{34} {+}c_{24}c_{43} {-}c_{23}c_{44} 
                  {-}c_{35}c_{44} {+}c_{34}c_{45} {-}c_{45}^2 {+}c_{44}c_{46} \cr
 h_{14} &\!=\!&   c_{22}c_{31} {-}c_{21}c_{32} {+}c_{32}^2 {-}c_{31}c_{34} {-}c_{23}c_{41} {-}c_{35}c_{41} 
                  {+}c_{21}c_{43} {-}c_{43}^2 {+}c_{31}c_{45} {+}c_{41}c_{46} \cr
 h_{15} &\!=\!&   c_{23}^2 {-}c_{21}c_{26} {+}c_{26}c_{32} {-}c_{35}^2 {-}c_{22}c_{36} {+}c_{34}c_{36} 
                  {+}c_{26}c_{43} {-}c_{36}c_{45} {-}c_{23}c_{46} {+}c_{35}c_{46} 
\end{array}
}
$$
Altogether, since $\dim_\QQ(\Lin_\M(I(\BO))) = 6 = \#Z$, 
we apply Corollary~\ref{cor:checkopt} and conclude that $\edim(B_\OO)=18$, that~$\Phi$ 
yields an optimal embedding of the 12-dimensional scheme $\BO$ into $\AA^{18}_{\QQ}$, 
and that the vanishing ideal of the image is minimally generated by~15 quadrics.
\end{example}

%
%

\medskip
\section*{Acknowledgements}
The second and third authors thank the University of Passau for its hospitality
during part of the preparation of this paper. The first and second authors were 
partially supported by the Vietnam National Foundation for Science and Technology Development
(NAFOSTED) grant number 101.04-2019.07.

%
%

\bigbreak

\end{document}